\documentclass[twoside,a4,11pt]{amsart}

\usepackage[english]{babel}
\usepackage{hyperref}
\usepackage{cleveref}
\usepackage[utf8x]{inputenc}
\usepackage{amsmath}
\usepackage{graphicx, stmaryrd}
\usepackage[colorinlistoftodos]{todonotes}
\usepackage{amsmath,amscd,amsthm}
\usepackage{amstext, amsfonts, a4}
\usepackage{amssymb}
\usepackage{tikz-cd}
\usepackage{fancyhdr}
\usepackage{enumerate}

\numberwithin{equation}{section}
\newtheorem{thm}[equation]{Theorem}
\newtheorem{defn}[equation]{Definition}

\newtheorem{prop}[equation]{Proposition}

\newtheorem{lem}[equation]{Lemma}

\theoremstyle{definition}
\newtheorem{ex}[equation]{Example}

\newtheorem{rem}[equation]{Remark}

\renewcommand{\dim}{\operatorname{dim}}
\renewcommand{\deg}{\operatorname{deg}}
\newcommand\ind{\operatorname{{ind}}}

\newcommand\End{\operatorname{End}}
\newcommand\Br{\operatorname{{Br}}}

\newcommand\Int{\operatorname{Int}}

\newcommand\Trd{\operatorname{{Trd}}}

\newcommand\cchar{\operatorname{char}}
\newcommand\disc{\operatorname{{disc}}}

\newcommand{\can}{\operatorname{{can}}}

\DeclareMathOperator{\Tr}{Tr}

\newcommand{\C}{\mathcal{C}}

\renewcommand{\leq}{\leqslant}
\renewcommand{\geq}{\geqslant}

\newcommand\Sym{\operatorname{Sym}}
\newcommand\Skew{\operatorname{Skew}}
\newcommand\Alt{\operatorname{Alt}}
\newcommand\Ad{\operatorname{Ad}}
\newcommand\ad{\operatorname{ad}}

\newcommand{\I}{{I}}
\newcommand{\W}{{W}}

\newcommand{\D}{\mathsf{D}}

\begin{document}
\title{Triality over fields with $\I^3=0$}

\date{\today}
	
\author{Fatma Kader B\.{i}ng\"{o}l}
\author{Anne Qu\'{e}guiner-Mathieu}

\address{Universiteit Antwerpen, Departement Wiskunde, Middelheim\-laan~1, 2020 Ant\-werpen, Belgium.}
\email{FatmaKader.Bingol@uantwerpen.be}
\address{LAGA - CNRS UMR 7539 - Institut Galil\'ee, Universit\'e Sorbonne Paris-Nord, 93430 Villetaneuse, France.}
\email{queguin@math.univ-paris13.fr}

\begin{abstract}
We characterize isotropic trialitarian triples in terms of the Schur indices of the underlying algebras
over a base field $F$ of arbitrary characteristic satisfying $I_q^3 F=0$. 
We also construct anisotropic trialitarian triples over such fields.
	
\medskip\noindent
{\sc Classification} (MSC 2020): 16K20, 16W10, 11E39
	
\medskip\noindent
{\sc{Keywords:}} quadratic pair, triality, isotropy, fields with $\I^3=0$
\end{abstract}

\maketitle

\section{Introduction}
Classical algebraic groups can be described explicitly in terms of central simple algebras endowed with involutions of various types, except for groups of type $\D$ over a field of characteristic $2$. 
In this case, the involution must be replaced by a quadratic pair, consisting of an involution and a linear form defined on the set of symmetric elements. 
Due to the fact that the Dynkin diagram $\D_4$ has more symmetries than other Dynkin diagrams of type $\D$, algebras of degree $8$ with quadratic pairs of trivial discriminant naturally come in the form of triples, called trialitarian triples, see~\cite[\S 42.A]{BOI}, \cite{DQM2021}, and~\S\ref{prel.sec}. 
The relation between the three algebras with quadratic pairs involved in such a triple is provided by the Clifford algebra. More precisely, the Clifford algebra of any element of the triple is the direct product of the other two.
Properties of trialitarian triples are related to the properties of the underlying algebraic groups. 
For example, a triple is isotropic if and only if the associated algebraic groups are isotropic.

The correspondance between classical algebraic groups and algebras with involution, or algebras with quadratic pairs, has proved useful to address the classification problem over some specific base fields. 
Perfect fields of cohomological dimension at most $2$ are of particular interests : Serre's conjecture II states that, over such a field $F$, any $F$-torsor under a semi-simple and simply-connected linear algebraic group is trivial, or, equivalently, has a rational point. 
This conjecture has been extended by Serre to the imperfect case, see P. Gille's lecture note~\cite{Gille} for a detailed account on this problem. 
Higher versions of a weaker statement have also been recently considered in~\cite{ILA}, where rational points are replaced by cycles.

For classical groups, Serre's conjecture II was proved by Bayer-Fluckiger and Parimala when the base field has characteristic different from $2$; Berhuy, Frings and Tignol extended the result to fields of arbitrary characteristic, see~\cite{BP1995}, \cite{BFT2007}. 
In both cases, the authors establish classification theorems for hermitian forms and  algebras with quadratic pairs, in terms of their low-degree cohomological invariants, when the base field has separable cohomological dimension at most $2$. 
Those theorems play a crucial role in the proof of the conjecture. 

As explained in~\cite[Proposition 7.1.7]{Gille}, a field $F$ has $2$-separable cohomological dimension at most $2$ if and only if $I_q^3L=0$ for all finite separable extension $L/F$. 
In particular, the condition $I_q^3F=0$ is weaker than the condition on the separable cohomological dimension, see \cite[Remark 3.6]{EL73}.
Nevertheless, in characteristic different from $2$, the classification theorems in type $\D$ hold as soon as the base field $F$ satisfies $I_q^3F=0$, or equivalently when the reduced norm map is surjective for all quaternion algebras over $F$, see~\cite[Theorem 4.4.1]{BP1995}, \cite[Theorem A]{LT1999}. We do not know whether this weaker condition is enough in characteristic $2$. 

The main result of this paper is \Cref{main.thm}, which provides an explicit illustration of these classification results, under the weaker assumption $I_q^3F=0$. 
Consider a central simple algebra of degree $8$ with a quadratic pair of trivial discriminant; its Clifford invariant is determined by the Brauer classes of the other two central simple algebras involved in the underlying trialitarian triple. 
Assuming the base field $F$ satisfies $I_q^3F=0$, we provide a characterization of isotropy for such a triple in terms of the Schur indices of the three central simple algebras involved. 
We also extend to the characteristic $2$ case the examples of anisotropic trialitarian triples described in~\cite{QMSZ2012}, and prove that they occur over some field $F$ with $I_q^3F=0$. 

An important tool in the proofs is the notion of quadratic extension of an algebra with unitary involution, introduced in~\cite{GQM2009} when the base field has characteristic different from $2$, which we extend to the setting of quadratic pairs in~\S\ref{quadext.sec}. 

\section{Preliminaries and notations}\label{prel.sec}
Our main references are \cite{BOI} for the theory of central simple algebras with involution and quadratic pairs, and \cite{EKM} for the theory of quadratic forms; most of our notations are borrowed from these books. 

Let $F$ be a field of arbitrary characteristic.
As in~\cite{BOI}, we call $(A,\sigma)$ an $F$-algebra with involution if it is either a central simple $F$-algebra with an $F$-linear involution, or a central simple $K$-algebra with a $K/F$-unitary involution, for some étale quadratic $F$-algebra $K$. 
Moreover, following \cite{EKM}, we use the notation $\I_qF$ for the Witt group of nonsingular even-dimensional quadratic forms over $F$, $\I F$ for the ideal of even-dimensional forms in the Witt ring $\W F$ of non-degenerate symmetric bilinear forms over $F$, and $\I^n_qF$ for the product $\I^{n-1}F\cdot\I_qF$ for $n\geq2$. 
Therefore, $I_q^3F=0$ if and only if all $3$-fold quadratic Pfister forms over $F$ are hyperbolic, or, equivalently, the reduced norm map is surjective for all quaternion algebras over $F$. 

Recall from \cite[(5.4)]{BOI} that a \emph{quadratic pair} on the central simple $F$-algebra $A$ is a pair $(\sigma,f)$, where $\sigma$ is an $F$-linear involution on $A$ and $f:\,\Sym(A,\sigma)\rightarrow F$ is a linear map, such that the following two conditions hold: 
\begin{enumerate}
    \item $\dim_F\Sym(A,\sigma)=\frac{n(n+1)}{2}$ and $\Trd_A\bigl(\Skew(A,\sigma)\bigr)=\{0\}$, and
    \item $f\bigl(x+\sigma(x)\bigr)=\Trd_A(x)$ for all $x\in A$,
\end{enumerate}
where $\Sym(A,\sigma)$ and $\Skew(A,\sigma)$ stand for the set of symmetric and skew-symmetric elements of $(A,\sigma)$. 
The map $f$ is called a semi-trace on $(A,\sigma)$. 
If $\cchar F\not =2$, it follows from the definition that $\sigma$ is an orthogonal involution, and the semi-trace satisfies $f(x)=\frac 12 \Trd_A(x)$ for all $x\in\Sym(A,\sigma)$. 
In particular, there is a unique semi-trace associated to a given orthogonal involution, so quadratic pairs and orthogonal involutions are equivalent notions in this case. 
If $\cchar F=2$, then $\sigma$ is symplectic, and several non-isomorphic semi-traces may be defined on $\Sym(A,\sigma)$ in general. Nevertheless, by definition, they all agree on the subvector space $\Alt(A,\sigma)$ of alternating elements in $\Sym(A,\sigma)$. 

Consider an element $\ell \in A$ satisfying $\ell+\sigma(\ell)=1$. 
The $F$-linear map defined by $f_{\ell}(x)=\Trd_A(\ell x)$ for $x\in\Sym(A,\sigma)$ is a semi-trace on $(A,\sigma)$. 
Furthermore, any semi-trace $f$ on $(A,\sigma)$ coincides with $f_{\ell}$ for some $\ell\in A$, which is uniquely determined up to the addition of an alternating element of $(A,\sigma)$, see \cite[(5.7)]{BOI}.

As explained in~\cite[\S 6]{BOI}, a right ideal $I\subset A$ is called isotropic for $(\sigma,f)$ 
if $\sigma(I)I=\{0\}\mbox{ and }f(I\cap\Sym(A,\sigma))=\{0\}.$
We call $(A,\sigma,f)$, or simply $(\sigma,f)$, isotropic if there exists a non-zero isotropic right ideal for $(\sigma,f)$, and hyperbolic if there exists an isotropic right ideal $I$ for $(\sigma,f)$ with $\dim_FI=\frac 12 \dim_FA$. 

We only consider nonsingular quadratic forms. 
Given an even-dimensional vector space $V$ over $F$, there is a bijective correspondance between similarity classes of quadratic forms on $V$ and isomorphism classes of quadratic pairs on $\End_F(V)$, see~\cite[(5.11)]{BOI}. 
We denote by $\Ad(q)=(\End_F(V),\ad_q,f_q)$ the algebra with quadratic pair associated to a quadratic form $q$ under this correspondence. 
Similarly, given a separable quadratic field extension $K/F$, and a hermitian module $(V,h)$ over $(K,\iota)$, where $\iota$ is the unique non-trivial $F$-automorphism of $K$, we let $\Ad(h)$ be the $F$-algebra with $K/F$-unitary involution $(\End_K(V), \ad_h)$. 

To an $F$-algebra of even degree with quadratic pair $(A,\sigma,f)$, one may associate a Clifford algebra $\C(A,\sigma,f)$, which is endowed with a canonical involution $\underline{\sigma}$, and a canonical quadratic pair $(\underline{\sigma},\underline{f})$ if $\deg A\geq 8$, see~\cite[\S 8.B]{BOI} and~\cite[\S 3.1]{DQM2021}. 
When $(A,\sigma,f)=\Ad(q)$, its Clifford algebra is isomorphic to the even part $\C_0(q)$ of the Clifford algebra $\C(q)$ of $q$. 
The center of $\C(A,\sigma,f)$ is an étale quadratic $F$-algebra, which corresponds to a class in $F^\times /F^{\times 2}$ in characteristic different from $2$, and a class in the quotient $F/\wp(F)$ of $F$ by the image of the Artin-Schreier map in characteristic $2$. 
In both cases, this class is called the discriminant of $(\sigma,f)$ and denoted by $\disc(\sigma,f)$. 
In particular, $(\sigma,f)$ has trivial discriminant if and only if the Clifford algebra splits as a direct product $\C^+\times \C^-$ for some central simple $F$-algebras $\C^+$ and $\C^-$. 

A trialitarian triple over $F$ is a triple of $F$-algebras of degree $8$ with quadratic pair $((A,\sigma_A,f_A),(B,\sigma_B,f_B),(C,\sigma_C,f_C))$ such that there exists an isomorphism
$$\varphi_A:(\C(A,\sigma_A,f_A),\underline{\sigma_A},\underline{f_A})\xrightarrow{\sim}(B,\sigma_B,f_B)\times(C,\sigma_C,f_C).$$ 
Moreover, the Clifford algebra of any element in the triple is isomorphic to the direct product of the other two by~\cite[(42.3)]{BOI} and \cite[Theorem 4.11]{DQM2021}: 
\begin{prop}\label{triality-permut}
    Let $\bigl((A,\sigma_A,f_A),(B,\sigma_B,f_B),(C,\sigma_C,f_C)\bigr)$ be a trialitarian triple over $F$.  
    There are isomorphisms 
\begin{equation*}
\begin{split}
    \varphi_B:\bigl(\C(B,\sigma_B),\underline{\sigma_B},\underline{f_B}\bigr)&\xrightarrow{\sim}(C,\sigma_C,f_C)\times(A,\sigma_A,f_A),\\
    \varphi_C:\bigl(\C(C,\sigma_C),\underline{\sigma_C},\underline{f_C}\bigr)&\xrightarrow{\sim}(A,\sigma_A,f_A)\times(B,\sigma_B,f_B).
\end{split}
\end{equation*}
\end{prop}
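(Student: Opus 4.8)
The plan is to reduce both isomorphisms to a single statement by an iteration trick, and then to prove that statement by reduction to the split case. For the reduction, it is enough to show that \emph{every} trialitarian triple $\bigl((X,\sigma_X,f_X),(Y,\sigma_Y,f_Y),(Z,\sigma_Z,f_Z)\bigr)$, with defining isomorphism $\varphi_X\colon\bigl(\C(X,\sigma_X,f_X),\underline{\sigma_X},\underline{f_X}\bigr)\xrightarrow{\sim}(Y,\sigma_Y,f_Y)\times(Z,\sigma_Z,f_Z)$, admits an isomorphism
\[
\psi\colon\bigl(\C(Z,\sigma_Z,f_Z),\underline{\sigma_Z},\underline{f_Z}\bigr)\xrightarrow{\sim}(X,\sigma_X,f_X)\times(Y,\sigma_Y,f_Y).
\]
Applying this to $\bigl((A,\sigma_A,f_A),(B,\sigma_B,f_B),(C,\sigma_C,f_C)\bigr)$ produces $\varphi_C$; since $\varphi_C$ then presents $\bigl((C,\sigma_C,f_C),(A,\sigma_A,f_A),(B,\sigma_B,f_B)\bigr)$ as a trialitarian triple, applying the same statement once more produces $\varphi_B$. (Combined with the trivial freedom to interchange the two direct factors of a defining isomorphism, this realizes the full $S_3$-symmetry of trialitarian triples.)

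To construct $\psi$ I would argue as in \cite[\S\,42]{BOI}. Over a separable closure $F_{\mathrm{sep}}$ every nonsingular even-dimensional quadratic form is hyperbolic, so all three members of the triple become split and adjoint to the $8$-dimensional hyperbolic quadratic space, and the claim reduces to classical triality for $\mathrm{Spin}_8$: the even Clifford algebra of an $8$-dimensional quadratic form $q$ of trivial discriminant is the product $\End(V_{+})\times\End(V_{-})$ of the endomorphism algebras of its two half-spin quadratic spaces, and the three $8$-dimensional quadratic spaces $V$, $V_{+}$, $V_{-}$ are cyclically interchanged by this construction; one sees this, for instance, via the multiplication of a Cayley algebra or by an explicit computation inside $\C(q)$. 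To descend to $F$, the point is that the datum $\varphi_A$ is $F$-rational: the Clifford bimodule of $(A,\sigma_A,f_A)$ carries compatible module structures over $A$ and over $\C(A,\sigma_A,f_A)\cong B\times C$ which, together with the canonical quadratic-pair data, organize $A\times B\times C$ into a trialitarian algebra in the sense of \cite{BOI}, in which the three factors play symmetric roles; reading off the Clifford algebra of each factor then yields $\psi$ over $F$. In characteristic different from $2$ this is \cite[(42.3)]{BOI}, and in arbitrary characteristic \cite[Theorem 4.11]{DQM2021}.

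I expect the main obstacle to be the characteristic $2$ case, and within it the bookkeeping for the semi-traces. When $\cchar F\neq 2$ the quadratic pairs amount to orthogonal involutions carrying no extra data, so the form-level triality transports verbatim. When $\cchar F=2$ one must additionally check that the canonical linear forms $\underline{f_A}$, $\underline{f_B}$, $\underline{f_C}$ are compatible with the permutation, using that all semi-traces coincide on the space of alternating elements together with the uniqueness and dimension count that pin down the induced quadratic pair on a Clifford algebra. The other subtle point is the descent itself: whichever route one takes, one must ultimately show that the cyclic symmetry visible after base change to $F_{\mathrm{sep}}$ — where the relevant isomorphism is canonical only up to an inner automorphism of $\mathrm{PGO}_8^{+}$ — is already defined over $F$ once $\varphi_A$ is fixed. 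These two points are the technical heart of the cited references.
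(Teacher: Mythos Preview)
The paper does not supply its own proof of this proposition: it is stated as a known result, with \cite[(42.3)]{BOI} and \cite[Theorem~4.11]{DQM2021} cited immediately before the statement, and \cite[Definition~3.17]{BT2023} cited just after for the sharper claim that a choice of $\varphi_A$ actually determines $\varphi_B$ and $\varphi_C$. Your proposal lands on exactly the same references and goes further by sketching their content; the outline you give---reduction to the split case over $F_{\mathrm{sep}}$, classical triality for $8$-dimensional quadratic spaces, descent via the Clifford-bimodule/trialitarian-algebra formalism, and the characteristic-$2$ check that the canonical semi-traces are compatible with the permutation---is an accurate summary of what those sources do. Your iteration trick (derive $\varphi_B$ by applying the one-step statement twice) is a neat economy, though once one has the trialitarian-algebra machinery of \cite{BT2023} the full $S_3$-symmetry is visible directly and no iteration is needed.
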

More precisely, any isomorphism $\varphi_A$ determines isomorphisms $\varphi_B$ and $\varphi_C$ as above, as explained in~\cite[Definition 3.17]{BT2023}. 

\section{Quadratic extensions of algebras with unitary involution}\label{quadext.sec}
Let $(A,\sigma)$ be an $F$-algebra with $F$-linear involution. 
Assume $A$ contains an étale quadratic $F$-algebra $K$, which is $\sigma$-stable. 
Then the centralizer $C$ of $K$ in $A$ is a central simple $K$-algebra of degree $\frac 12 \deg A$ and Brauer equivalent to $A_K=A\otimes_FK$. 
Moreover, $\sigma$ induces an involution $\sigma_C$ on $C$, and we say that $(A,\sigma)$ is a \emph{quadratic extension of $(C,\sigma_C)$}. 
This notion of quadratic extension for algebras with involution was introduced in~\cite[\S 1]{GQM2009} when the base field has characteristic different from $2$. 
It is of particular interest when $\sigma$ acts non-trivially on $K$, so that $\sigma_C$ is unitary. 
Indeed, in this case, any central simple $F$-algebra $A'$, of exponent  $2$ and degree $2\deg C$, and such that $A'_K$ is Brauer-equivalent to $C$, is equipped with a unique symplectic and a unique orthogonal involution that extend the unitary involution $\sigma_C$, see~\cite[Lemma 1.6 \& Proposition 1.9]{GQM2009}. 

The situation is somewhat different in characteristic $2$. Assume $\sigma$ acts non trivially on $K$. 
Pick a generator $u\in K$, with $u^2+u=a\in F$.
Then $\sigma(u)=u+1$, so that $1$ is alternating, and it follows that $\sigma$ is symplectic, see~\cite[(2.6) (2)]{BOI}. 
Moreover, by the same argument as in the proof of ~\cite[Lemma 1.6]{GQM2009}, the involution $\sigma$ is the unique involution on $A$ that extends the unitary involution $\sigma_C$. 
In particular, there is no orthogonal involution on $A$ extending $\sigma_C$ in characteristic $2$. 
In this section, we introduce a notion of quadratic pair extending a unitary involution, which encompasses orthogonal quadratic extensions in characteristic different from $2$.
The definition is motivated by \Cref{split.ex} and \Cref{existeunique.prop} below. 

\begin{defn}\label{quadext.def}
Let $(A,\sigma,f)$ be an $F$-algebra with quadratic pair, and $(C,\sigma_C)$ an $F$-algebra with unitary involution. 
We say that $(A,\sigma,f)$ is a quadratic extension of $(C,\sigma_C)$ if $A$ contains a $\sigma$-stable étale quadratic $F$-algebra $K$ such that :
\begin{enumerate}[$(1)$]
    \item the centralizer of $K$ in $A$, endowed with the involution induced by $\sigma$, is isomorphic to $(C,\sigma_C)$; and
    \item the semi-trace on $\Sym(A,\sigma)$ is given by $f=f_\ell$, where $\ell$ is a generator of $K$ satisfying $\ell^2-\ell\in F$. 
\end{enumerate}
\end{defn}
\begin{rem}\label{unique.rem}
    Since $\sigma_C$ is unitary, it follows from the definition that $\sigma$ acts non-trivially on $K$, hence it coincides with the non-trivial $F$-automorphism $\iota$ of $K$. 
    Therefore, a direct computation shows that for any generator $\ell$ of $K$ satisfying $\ell^2-\ell\in F$, we have $\sigma(\ell)=1-\ell$. 
    So $f_\ell$ is a semi-trace on $A$, see~\cite[(5.7)]{BOI}.  
\end{rem}

\begin{ex}\label{quad-ext-quat}
    Let $Q$ be an $F$-quaternion algebra, with canonical involution $\can_Q$, and let $K$ be an \'etale quadratic $F$-algebra contained in $Q$. 
    Pick a generator $\ell$ of $K$ with $\ell^2-\ell\in F$ and set $\gamma=\Int(2\ell-1)\circ\can_Q$, so that $\gamma=\can_Q$ if $\cchar F=2$, and $\gamma$ is an orthogonal involution when $\cchar F\not =2$. 
    In both cases, $\gamma$ acts non trivially on $K$ and the $F$-algebra with quadratic pair $(Q,\gamma, f_\ell)$ is a quadratic extension of $(K,\iota)$, where $\iota$ is the unique non-trivial $F$-automorphism of $K$. 
    Moreover, one may check that the discriminant of $(\gamma, f_\ell)$ is the class defining the \'etale quadratic $F$-algebra $K$.  

    Moreover, for any $F$-algebra with $F$-linear involution $(C_0,\gamma_0)$, the $F$-algebra with quadratic pair \[(A,\sigma,f)=(C_0,\gamma_0)\otimes_F (Q,\gamma,f_\ell)\] is a quadratic extension of $(C_0,\gamma_0)\otimes_F(K,\iota)$. 
    Indeed, the element $1\otimes \ell\in C_0\otimes Q$ satisfies $\sigma(1\otimes \ell)+1\otimes \ell=1$, so it induces a semi-trace on $\Sym(C_0\otimes Q,\gamma_0\otimes \gamma)$. 
    On an elementary tensor $s\otimes t\in \Sym(C_0,\gamma_0)\otimes \Sym(Q,\gamma)$, it acts as follows : 
    \[f_{1\otimes \ell}(s\otimes t)=\Trd_{C_0\otimes Q}(s\otimes\ell t)=\Trd_{C_0}(s)\Trd_Q(\ell t)=\Trd_{C_0}(s)f_\ell(t).\] 
    Therefore, by~\cite[(5.18)]{BOI}, which defines the tensor product of an algebra with involution and an algebra with quadratic pair, the semi-trace $f_{1\otimes \ell}$ coincides with the semi-trace $f$, as required. 
\end{ex}

\begin{ex}\label{split.ex}
    Let $K$ be an étale quadratic $F$-algebra, $\ell\in K$ a generator with $\ell^2-\ell\in F$, and $\iota$ the non-trivial $F$-automorphism of $K$. 
    Let $V$ be a finite-dimensional $K$-vector space and $h:V\times V\to K$ a nonsingular hermitian form over $(K,\iota)$. 
    We denote by $q_h:\, V\rightarrow F$ the corresponding trace form, that is the quadratic form defined on the $F$-vector space $V$ by $q_h(x)=h(x,x)$ for all $x\in V$, see~\cite[Chapter 10, \S 1]{Schar85}. 
    We claim that $\Ad(q_h)$ is a quadratic extension of $\Ad(h)$.

    This can be checked as follows. Right multiplication by elements of $K$ provides an embedding of $K$ in $\End_F(V)$, with centralizer $\End_K(V)$. 
    Moreover, since \[b_{q_h}(x,y)=h(x,y)+h(y,x)\] for all $x,y\in V$, a direct computation shows that the involution $\ad_{q_h}$ acts as $\ad_h$ on $\End_K(V)$, and as $\iota$ on $K$. 
    This finishes the proof if $\cchar F\neq2$. If $\cchar F=2$, it remains to prove that $f_{q_h}=f_\ell$. 
    Recall that, by~\cite[(5.11)]{BOI}, the semi-trace $f_{q_h}$ is characterized by 
    \[f_{q_h}\bigl(\varphi_{q_h}(v\otimes v)\bigr)=q_h(v)\ \ \forall v\in V,\]
    where $\varphi_{q_h}$ is the isomorphism of algebras with involution \[(V\otimes V, s)\rightarrow (\End_F(V),\ad_{q_h})\] defined by $\varphi_{q_h}(v\otimes w)(x)=vb_{q_h}(w,x)$ for all $v,w,x\in V$, and $s$ is the switch involution, that is $s(v\otimes w)=w\otimes v$. 

    Since the value of a semi-trace is prescribed on alternating elements, it is enough to prove that the linear maps $f_{q_h}$ and $f_\ell$ agree on a complement of $\Alt\bigl(\End_F(V),\ad_{q_h}\bigr)$ in $\Sym\bigl(\End_F(V),\ad_{q_h}\bigr)$. 
    The elements $\varphi_{q_h}(e_i\otimes e_i)$ and $\varphi_{q_h}(e_i\ell\otimes e_i\ell)$ for $1\leq i\leq n$, where $e_1,\dots, e_n$ is a $K$-basis of $V$, provide a basis for such a complement. 
    Therefore, it is enough to prove that for all integer $i$ with $1\leq i\leq n$, we have 
    \[\Tr\bigl(\varphi_{q_h}(e_i\otimes e_i)\ell\bigr)=q_h(e_i)\mbox{ and }\Tr\bigl(\varphi_{q_h}(e_i\ell\otimes e_i\ell)\ell\bigr)=q_h(e_i\ell),\]
    where $\Tr$ is the usual trace on $\End_F(V)$. 
    The map $\varphi_{q_h}(e_i\otimes e_i)\ell$ sends any vector $x\in V$ to $e_ib_{q_h}(e_i,x\ell)$. 
    Therefore, its trace is given by 
\begin{equation*}
\begin{split}
    \Tr\bigl(\varphi_{q_h}(e_i\otimes e_i)\ell\bigr)&=b_{q_h}(e_i,e_i\ell)=h(e_i,e_i\ell)+h(e_i\ell,e_i)\\
    &=h(e_i,e_i)(\ell+\iota(\ell))=q_h(e_i).
\end{split}
\end{equation*}   
    Similarly, the trace of $\varphi_{q_h}(e_i\ell\otimes e_i\ell)\ell$ is equal to 
    \[b_{q_h}(e_i\ell,e_i\ell^2)=h(e_i\ell,e_i\ell)(\ell+\iota(\ell))=q_h(e_i\ell),\]
    and this proves $f_\ell=f_{q_h}$.    
\end{ex}
    
With Definition~\ref{quadext.def} in hand, the existence and uniqueness result for quadratic extensions given in~\cite[\S 1]{GQM2009} extends to an arbitrary base field as follows: 
\begin{prop} \label{existeunique.prop}
    Let $K$ be an étale quadratic $F$-algebra, and $(C,\sigma_C)$ an $F$-algebra with $K/F$-unitary involution. 
    Assume $C$ has exponent at most $2$, and consider a central simple $F$-algebra $A$ of exponent $2$ and degree $2\deg C$ such that $A_K$ is Brauer equivalent to $C$. 
    There exists a unique symplectic involution $\gamma$ and a unique quadratic pair $(\sigma, f)$ on $A$ such that $(A,\gamma)$ and $(A,\sigma,f)$ are quadratic extensions of $(C,\sigma_C)$.
\end{prop}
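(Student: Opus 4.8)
The plan is to split the statement into an existence part and a uniqueness part, and to treat the symplectic involution $\gamma$ and the quadratic pair $(\sigma,f)$ in parallel, since the symplectic case is essentially the content of~\cite[Lemma 1.6 \& Proposition 1.9]{GQM2009} carried over verbatim to arbitrary characteristic (only the hypothesis $\exp C\leq 2$ is used there, not any restriction on $\cchar F$), so I would invoke that reference for $\gamma$ and concentrate the real work on $(\sigma,f)$. For existence, I would first embed $K$ into $A$ as a $\gamma$-stable étale subalgebra: since $A_K$ is Brauer equivalent to $C$ and $\deg A=2\deg C$, a standard argument (e.g.\ the one underlying~\cite[Lemma 1.6]{GQM2009}, using that $A$ is split by $K$ up to Brauer equivalence and has the right degree) produces a copy of $K$ in $A$ with centralizer $C$, and the induced involution on $C$ is a unitary involution whose restriction data match $\sigma_C$; after adjusting by an inner automorphism one may assume the induced involution on the centralizer is exactly $\sigma_C$. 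Pick a generator $\ell\in K$ with $\ell^2-\ell=a\in F$; by \Cref{unique.rem}, $\gamma(\ell)=1-\ell$ (for $\gamma$ symplectic this is automatic once $\gamma$ acts as $\iota$ on $K$), so $\ell+\gamma(\ell)=1$ and hence $f:=f_\ell$, defined by $f_\ell(x)=\Trd_A(\ell x)$ on $\Sym(A,\gamma)$, is a genuine semi-trace by~\cite[(5.7)]{BOI}. Then $(A,\gamma,f_\ell)$ is by construction a quadratic extension of $(C,\sigma_C)$ in the sense of \Cref{quadext.def}, with $\sigma=\gamma$; note that in characteristic $2$ the underlying involution of the quadratic pair is forced to be symplectic (as recalled in~\S\ref{quadext.sec}, since $\gamma(u)=u+1$ makes $1$ alternating), which is consistent with the statement's phrasing.

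For uniqueness, suppose $(A,\sigma,f)$ and $(A,\sigma',f')$ are both quadratic extensions of $(C,\sigma_C)$, witnessed by $\sigma$-stable, resp.\ $\sigma'$-stable, étale quadratic subalgebras $K$ and $K'$ of $A$, both with centralizer $C$ and inducing $\sigma_C$. First I would show $\sigma=\sigma'$ as involutions: the Skolem--Noether argument in~\cite[Lemma 1.6]{GQM2009} shows that the involution on $A$ extending a given unitary involution on the centralizer of an étale quadratic subalgebra is unique, and the remark in~\S\ref{quadext.sec} notes that in characteristic $2$ this extension is literally the unique involution on $A$ restricting to $\sigma_C$; in characteristic $\neq 2$ one uses~\cite[Proposition 1.9]{GQM2009} to pin down the orthogonal extension. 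Since $\sigma=\sigma'$, we may conjugate so that $K=K'$ (two maximal étale subalgebras with the same centralizer are conjugate, and one checks the conjugating element can be taken to commute suitably with $\sigma$). It then remains to show the semi-traces coincide: $f=f_\ell$ and $f'=f_{\ell'}$ for generators $\ell,\ell'$ of $K$ with $\ell^2-\ell,\ \ell'^2-\ell'\in F$. The two such generators of a given $K$ differ by $\ell'=\ell$ or $\ell'=1-\ell=\sigma(\ell)$, and in the latter case $\ell'-\ell=1-2\ell$; since $f_{\ell}$ and $f_{\ell'}$ differ by $x\mapsto\Trd_A\bigl((\ell'-\ell)x\bigr)$ and $\ell'-\ell$ is (for $\ell'=1-\ell$) skew or alternating, this difference vanishes on $\Sym(A,\sigma)$, giving $f=f'$.

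The main obstacle I expect is the uniqueness of the underlying involution $\sigma$ in the quadratic-pair case in characteristic $2$, together with the ``conjugate so that $K=K'$'' reduction: one must be careful that the semi-trace condition in \Cref{quadext.def}(2) is what rigidifies the situation, because a priori a symplectic involution in characteristic $2$ admits many semi-traces, so the pair $(\sigma,f)$ could fail to be unique if the constraint that $f$ come from a generator of an involution-stable étale quadratic subalgebra were not imposed. I would therefore be most careful in verifying that any two choices of $(K,\ell)$ compatible with $(C,\sigma_C)$ lead, after the Skolem--Noether conjugation, to the same $f_\ell$ on $\Sym(A,\sigma)$ — the computation that $\Trd_A$ kills $\Skew(A,\sigma)$ (part of the quadratic pair axioms) and kills alternating elements is what makes this work, and it should be spelled out rather than waved at. The characteristic $\neq 2$ case, by contrast, reduces cleanly to~\cite{GQM2009} since there the semi-trace is $\tfrac12\Trd_A$ and carries no extra information.
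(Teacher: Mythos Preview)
Your overall strategy is close to the paper's, but there is a concrete error in your uniqueness argument for the semi-trace. You assert that the only generators $\ell'$ of a fixed $K$ with $\ell'^2-\ell'\in F$ are $\ell$ and $1-\ell$. This is false: in characteristic $2$, writing $K=F(u)$ with $u^2+u=a$, every $\ell'=u+\mu$ with $\mu\in F$ satisfies $\ell'^2-\ell'=a+\mu^2+\mu\in F$, so there is an entire $F$-parametrised family of admissible generators (and in characteristic $\neq 2$ there are even more, though there the semi-trace is $\tfrac12\Trd_A$ regardless). Your computation therefore treats only two of infinitely many cases. The paper's argument is precisely the missing step: in characteristic $2$ one has $\ell'-\ell=\mu\in F$, and since $\sigma$ is symplectic, $F\subset\Alt(A,\sigma)$ (indeed $\mu=\mu u-\sigma(\mu u)$), whence $f_{\ell'}=f_\ell$ by~\cite[(5.7)]{BOI}.

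Two further remarks. First, the paper does not merely cite a ``standard argument'' for the existence of the embedding $K\hookrightarrow A$; it writes down the explicit crossed-product model $A\simeq C\oplus Cv$ with $v^2=w$, $vx=\rho(x)v$ for $\rho=\nu\circ\sigma_C$ (using an orthogonal involution $\nu$ on $C$, available since $\exp C\leq 2$), and $\sigma(v)=v$. This explicit description is reused verbatim in the proof of \Cref{isot-subalg+unitary->quad-pair=isot}, so your abstract existence argument, even if it went through, would leave a gap downstream. Second, be careful with the line ``$(A,\gamma,f_\ell)$ \dots\ with $\sigma=\gamma$'': the identification $\sigma=\gamma$ is only valid in characteristic $2$; in characteristic $\neq 2$ the involution in the quadratic pair is orthogonal while $\gamma$ is symplectic, so they are distinct. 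You note this eventually, but your existence paragraph reads as if it treats both characteristics at once.
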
 
\begin{proof}
    In characteristic different from $2$, the result is stated in~\cite[Lemma 1.6 \& Proposition 1.9]{GQM2009}. 
    The same argument, slightly modified, also works in characteristic $2$. 
    The main difference is that the involution $\sigma$ in the quadratic pair is orthogonal in characteristic different from $2$, while it coincides with $\gamma$ in characteristic $2$. We sketch the argument, for further use. 
    
    The starting point is provided by~\cite[pp. 380, 381]{ET2001}. 
    Since $C$ has exponent at most $2$, it admits an involution $\nu$ of orthogonal type. 
    The semi-linear automorphism $\rho=\nu\circ \sigma_C$ satisfies  $\rho^2=\Int(w)$, for some element $w\in C^\times$, which can be chosen such that $\nu(w)=w$ and $\sigma_C(w)=w$.
    Consider the $F$-algebra $C\oplus Cv$ with multiplication defined by $v^2=w$ and $vx=\rho(x)v$ for all $x\in C$. 
    It is a central simple $F$-algebra by~\cite[Chapter 11, Theorem 10]{Alb39}. 
    In addition, it contains a copy of $K$, with centralizer $C$. 
    Therefore, the algebras $C\oplus Cv$ and $A$ are Brauer-equivalent over $K$. Their Brauer classes over $F$ differ by the class of a quaternion algebra which splits over $K$, so it can be written as $(K,\lambda)$ for some $\lambda\in F^\times$.
    By~\cite[(13.41)]{BOI}, replacing $w$ by $w\lambda$ in the construction above, we may assume $A\simeq C\oplus Cv$. 
    
    Let $\sigma$ be the involution on $A$ that acts as $\sigma_C$ on $C$ and with $\sigma(v)=v$. 
    If $\cchar F\not =2$, the involution $\sigma$ has the same type as $\nu$ by the proof of \cite[Proposition 1.9]{GQM2009}, hence it is orthogonal. 
    If $\cchar(F)=2$, since $\sigma$ acts as the non-trivial automorphism on $K$, we have $1\in\Alt(A,\sigma)$ so that $\sigma$ is symplectic. 
    In both cases, $(A,\sigma,f_\ell)$ is a quadratic extension of $(C,\sigma_C)$, where $\ell\in K$ is any generator with $\ell^2-\ell\in F$. 
    Uniqueness of $\sigma$ follows directly from the Skolem-Noether theorem as in the proof of \cite[Lemma 1.6]{GQM2009}. 
    It only remains to prove that $f_\ell$ is unique when $\cchar F=2$. 
    The étale quadratic $F$-algebra $K$ can be described as $K=F(u)$ with $u^2+u=a$ for some $a\in F$. 
    The condition on  $\ell$ guarantees $\ell=u+\mu$ for some $\mu\in F$. 
    Moreover, as $\sigma$ is symplectic, $F$ is contained in $\Alt(A,\sigma)$. 
    Hence, it follows by \cite[(5.7)]{BOI} that $f_\ell=f_u$. 
\end{proof}

The next proposition generalizes \cite[Lemma 4.3]{ET2001}.
\begin{prop}\label{isot-subalg+unitary->quad-pair=isot}
    Let $(A,\sigma,f)$ be a quadratic extension of the $F$-algebra with $K/F$-unitary involution $(C,\sigma_C)$. 
    If $(C,\sigma_C)$ is isotropic (respectively hyperbolic), then $(A,\sigma,f)$ is isotropic (respectively hyperbolic).
\end{prop}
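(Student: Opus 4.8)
The plan is to produce, from an isotropic right ideal of $C$, an explicit isotropic right ideal of $A$, using the concrete model $A \simeq C \oplus Cv$ from the proof of \Cref{existeunique.prop}. Suppose first $(C,\sigma_C)$ is isotropic, so there is a nonzero $x \in C$ with $\sigma_C(x)x = 0$; equivalently, there is a nonzero right ideal $J \subset C$ with $\sigma_C(J)J = \{0\}$. The natural candidate is $I = (J \oplus Jv) \cdot A$, or more precisely the right ideal of $A$ generated by $J$. Since $A$ is free of rank $2$ as a right $C$-module with basis $\{1,v\}$ and $vx = \rho(x)v$ with $\rho = \nu \circ \sigma_C$, one computes that the right ideal generated by $J$ equals $J \oplus \rho(J)v$ (using that $J$ is a right ideal of $C$ and $\rho$ is a ring anti-automorphism composed with... — here one must be a little careful: $\rho$ is semilinear, so $\rho(J)$ is again a right ideal since $\rho$ restricted to $C$ is a ring automorphism of the $F$-algebra $C$ up to the twist; in any case $Jv \cdot C = \rho(JC)v = \rho(J)v$, and $\rho(J)$ is a right ideal of $C$ because $J$ is). So set $I = J_1 \oplus J_2 v$ for suitable right ideals $J_1, J_2$ of $C$ built from $J$; the cleanest choice will be to take $I$ generated by a single element of the form $x$ or $x + yv$.

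The key verification is $\sigma(I)I = \{0\}$ and $f(I \cap \Sym(A,\sigma)) = \{0\}$. For the first, recall $\sigma$ acts as $\sigma_C$ on $C$ and fixes $v$, and $v^2 = w$ with $\sigma_C(w) = w$. For $a + bv, a' + b'v \in I$, expand
\[
\sigma(a+bv)(a'+b'v) = \bigl(\sigma_C(a) + \sigma(v)\sigma_C(b)\bigr)(a'+b'v) = \bigl(\sigma_C(a) + v\,\sigma_C(b)\bigr)(a'+b'v),
\]
and push all $v$'s to the right using $vx = \rho(x)v$ and $v^2 = w$; the result is an element of $C \oplus Cv$ whose two components are expressions in $\sigma_C$, $\rho$, $w$ applied to the generators of $J$. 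The point is that $\sigma_C(J)J = \{0\}$, together with the relation $\rho = \nu \circ \sigma_C$ and $\rho^2 = \Int(w)$, should force both components to vanish — this is exactly the computation underlying \cite[Lemma 4.3]{ET2001}, now carried out in the quadratic-pair language. For the semi-trace condition, use that $f = f_\ell$ with $\ell \in K \subset C \oplus Cv$ a generator with $\ell^2 - \ell \in F$: for $z \in I \cap \Sym(A,\sigma)$ one has $f(z) = \Trd_A(\ell z)$, and since $\ell \in C$, multiplication by $\ell$ preserves the $C$-component structure, so $\Trd_A(\ell z)$ reduces (via $\Trd_A = \Trd_C \circ \operatorname{(reduced\ trace\ to\ }C)$, more precisely via the standard formula for the reduced trace of $C \oplus Cv$) to $\Trd_C$ of something lying in $\sigma_C(J)J = \{0\}$, hence vanishes. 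One should double-check that $z \in \Sym(A,\sigma)$ combined with $z \in I$ really does land its relevant component in $\sigma_C(J)J$; this is where the symmetry hypothesis is used.

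For the hyperbolic case, start from a right ideal $J \subset C$ with $\sigma_C(J)J = \{0\}$ and $\dim_F J = \tfrac12 \dim_F C$. Then the ideal $I$ constructed above has $\dim_F I = \dim_F J_1 + \dim_F J_2 = 2\cdot\tfrac12\dim_F C = \dim_F C = \tfrac12 \dim_F A$, since $\dim_F A = 2\dim_F C$. Thus the same $I$ witnesses hyperbolicity of $(A,\sigma,f)$, once isotropy of $I$ has been established as above — and the dimension bookkeeping requires only that the passage $J \mapsto J_1, J_2$ preserves $F$-dimension, which holds because $\rho$ and $\sigma_C$ are ($F$-semilinear) bijections.

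I expect the main obstacle to be the bookkeeping in the $\sigma(I)I = \{0\}$ computation: keeping track of where $\rho$ versus $\sigma_C$ acts after moving $v$ past elements of $C$, and confirming that the cross-terms (the $Cv$-component of $\sigma(a+bv)(a'+b'v)$) genuinely vanish rather than merely the $C$-component. A clean way to organize this is to first prove the purely algebraic statement that $\sigma(I)I = \{0\}$ for $I = J \oplus \rho(J)v$ whenever $\sigma_C(J)J = \{0\}$, reducing everything to identities among $\sigma_C$, $\rho$, and $w$ on $J$; the semi-trace condition is then comparatively routine. An alternative, possibly shorter route, is to avoid the explicit model entirely: use that isotropy of a quadratic pair is detected after scalar extension to a splitting field together with an index-counting argument, but the direct construction above seems more in the spirit of the cited \cite[Lemma 4.3]{ET2001} and more robust in characteristic $2$.
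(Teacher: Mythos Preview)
Your overall strategy matches the paper's: use the explicit model $A=C\oplus Cv$, lift an isotropic ideal $J\subset C$ to a right ideal $I\subset A$, and count dimensions for hyperbolicity. Two points need correction.

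First, the right ideal generated by $J$ is $J\oplus Jv$, not $J\oplus\rho(J)v$. Indeed $(jv)c=j\rho(c)v$, so $Jv\cdot C=J\rho(C)v=Jv$, and $Jv\cdot v=Jw\subset J$. The paper streamlines your planned brute-force verification of $\sigma(I)I=\{0\}$ by picking an idempotent $e\in C$ with $J=eC$, so that $I=eA$ and $\sigma(e)e=0$ gives $\sigma(I)I=\sigma(A)\sigma(e)eA=\{0\}$ in one line. Your expansion also works, but the cross-terms vanish because each contains a factor in $\sigma_C(J)J$; there is no $\rho$-bookkeeping needed.

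Second, and more seriously, your semi-trace argument has a genuine gap. You claim $\Trd_A(\ell z)$ reduces to $\Trd_C$ of something lying in $\sigma_C(J)J=\{0\}$, but this is not what happens: for $z=a+bv\in I\cap\Sym(A,\sigma)$ one has $a\in J\cap\Sym(C,\sigma_C)$ and $b\in J\cap\Sym(C,\nu)$, and neither $a$ nor $\ell a$ lies in $\sigma_C(J)J$. The paper instead handles the two components separately. For the $v$-part, one checks that $\ell bv\in\Sym(A,\sigma)$ (using $\nu(\ell)=\ell$), and since $\sigma$ is symplectic in characteristic $2$, the reduced trace vanishes on all symmetric elements, so $f_\ell(bv)=\Trd_A(\ell bv)=0$. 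For the $C$-part, the isotropy condition enters via squares: $a=\sigma_C(a)\in\sigma_C(J)$, so $a^2\in\sigma_C(J)J=\{0\}$, hence $(\ell a)^2=\ell^2a^2=0$, and a nilpotent element has vanishing reduced trace, giving $f_\ell(a)=\Trd_A(\ell a)=0$. Your outline misses both mechanisms; without them the verification does not go through.
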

\begin{proof}
    The unitary involution $\sigma_C$ extends to a unique quadratic pair on $A$; therefore, we may use the explicit description of $(A,\sigma,f)$ given in the proof of \Cref{existeunique.prop}. 
    In particular, $A=C\oplus Cv$. Consider a right ideal $J$ of $C$ and let $I=J\oplus Jv$.
    Using the rules defining multiplication in $A=C\oplus Cv$, one may easily check that $I$ is a right ideal of $A$. 
    Moreover, we have that $\dim_FI=2\dim_FJ=4\dim_K J$. 
    Hence $\dim_FI=\frac 12 \dim_F A$ if and only if $\dim_KJ=\frac 12 \dim_K C$, and to conclude the proof, it only remains to prove that $I$ is isotropic for $(\sigma,f)$ when $J$ is isotropic for $\sigma_C$. 
    
    So, let us assume $J$ is an isotropic ideal for $\sigma_C$, and consider an idempotent element $e\in C$ such that $J=eC$. 
    We have $\sigma_C(e)e=0$ and $I=eA$; in particular, $I$ is isotropic for $\sigma$, and this finishes the proof if $\cchar F\neq2$. 
    Assume now $\cchar F=2$; we need to prove that the semi-trace $f=f_u$ vanishes on $I \cap \Sym(A,\sigma)$. 
    
    For all $z=x+yv\in A$, with $x,y\in C$, we have $\sigma(z)=\sigma(x)+v\sigma(y)=\sigma(x)+\nu(y)v.$ 
    Therefore, \[\Sym(A,\sigma)=\Sym(C,\sigma_C)\oplus \Sym(C,\nu)v.\]
    We claim that $f_u$ vanishes on $\Sym(C,\nu)v$. 
    Indeed, since $\nu$ is $K$-linear, it satisfies $\nu(u)=u$. So, for $y\in\Sym(C,\nu)$, we have $\sigma(uyv)=\nu(uy)v=uyv$. 
    Since $\sigma$ is symplectic, it follows by~\cite[(2.6)]{BOI} that $f_u (yv)=\Trd_A(u yv)=0$. 
 
    Consider now an element $x\in \Sym(C,\sigma_C)$ and assume in addition $x\in I$. 
    In particular, it satisfies $x=ex=\sigma(x)=\sigma(x)\sigma(e)$. Since $\sigma(e)e=0$, we get $x^2=0$. 
    It follows that $(u x)^2=0$ and hence $f_u(x)=\Trd_A(u x)=0$. This concludes the proof. 
\end{proof}

By Proposition~\ref{existeunique.prop}, the quadratic pair $(\sigma,f)$ on $A$ is determined by $(C,\sigma_C)$. 
The next result shows that its invariants are determined by those of $(C,\sigma_C)$: 
\begin{lem}
    Let $(A,\sigma,f)$ be an $F$-algebra with quadratic pair and assume that it is a quadratic extension of the $F$-algebra with $K/F$-unitary involution $(C,\sigma_C)$. 
    We assume in addition that $C$ has even degree, so that $A$ has degree divisible by $4$. 
    Then $(\sigma,f)$ has trivial discriminant and its Clifford invariant coincides with the class of the Discriminant algebra ${\mathcal {D}}(C,\sigma_C)$ in $\Br(F)/\langle[A]\rangle$. 
\end{lem}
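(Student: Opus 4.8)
The plan is to reduce to the split case by a generic splitting of $A$, where the algebra with quadratic pair becomes the adjoint of a trace form, and then to descend. Concretely, I would set $L=F(\mathrm{SB}(A))$, the function field of the Severi--Brauer variety of $A$, so that $A_L$ is split. Since $(A,\sigma,f)$ is a quadratic extension of $(C,\sigma_C)$, the centralizer condition in \Cref{quadext.def} gives $[C]=\res_{K/F}[A]$ in $\Br(K)$, a class of exponent at most $2$; hence $[C_L]=\res_{K_L/L}[A_L]=0$, so $C_L$ is split over the étale quadratic $L$-algebra $K_L=K\otimes_F L$. Note also that the base change of a quadratic extension is again one, so $(A_L,\sigma_L,f_L)$ is a quadratic extension of $(C_L,\sigma_{C,L})$.

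Next I would identify $(A_L,\sigma_L,f_L)$ with the adjoint of a trace form. Since $C_L$ is split over $K_L$, the $K_L/L$-unitary involution $\sigma_{C,L}$ is adjoint to a nonsingular hermitian form $h$ over $K_L/L$ (\cite[(4.2)]{BOI}); by \Cref{split.ex}, $\Ad(q_h)$ is then a quadratic extension of $(C_L,\sigma_{C,L})\simeq\Ad(h)$. As $(A_L,\sigma_L,f_L)$ is also a quadratic extension of $(C_L,\sigma_{C,L})$, and as the hypotheses of \Cref{existeunique.prop} hold over $L$ ($A_L$ has exponent at most $2$, degree $2\deg C_L$, and $(A_L)_{K_L}$ is Brauer equivalent to $C_L$), \Cref{existeunique.prop} yields $(A_L,\sigma_L,f_L)\simeq\Ad(q_h)$. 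In particular $\C(A_L,\sigma_L,f_L)\simeq\C_0(q_h)$, so $\disc(\sigma_L,f_L)=\disc(q_h)$ and the Clifford invariant of $(\sigma_L,f_L)$ equals that of $q_h$.

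I would then compute with $q_h$. Diagonalising $h\simeq\langle a_1,\dots,a_n\rangle$ with $a_i\in L^\times$ — possible in any characteristic, since a nonsingular hermitian form over a separable quadratic (or split) extension is diagonalisable — one finds that $q_h$ is isometric to $b\otimes\N_{K_L/L}$, where $b$ is the diagonal bilinear form $\langle a_1,\dots,a_n\rangle$ and $\N_{K_L/L}$ is the binary nonsingular norm form, which lies in $I_qL$. Because $n=\deg C$ is even, $b$ lies in the fundamental ideal $IL$, so $q_h\in IL\cdot I_qL=I_q^2L$; hence $\disc(q_h)$ is trivial, and the Clifford invariant of $q_h$ is its class in $I_q^2L/I_q^3L$. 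By the computation of the discriminant algebra of $\Ad(h)$ in \cite[\S 10]{BOI}, this class equals $[\mathcal{D}(\Ad h)]$, and $\mathcal{D}(\Ad h)=\mathcal{D}(C_L,\sigma_{C,L})$ is Brauer equivalent to $\mathcal{D}(C,\sigma_C)\otimes_F L$ by functoriality of the discriminant algebra (there is no ambiguity in the Clifford invariant here since $A_L$ is split).

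Finally I would descend from $L$ to $F$. The variety $\mathrm{SB}(A)$ is geometrically integral, so $F$ is relatively algebraically closed in $L$; hence $F^\times/F^{\times 2}\to L^\times/L^{\times 2}$ in characteristic different from $2$, and $F/\wp(F)\to L/\wp(L)$ in characteristic $2$, are injective. Since $\disc(\sigma,f)$ becomes trivial over $L$, it is already trivial over $F$, so $\C(A,\sigma,f)\simeq\C^+\times\C^-$. By Amitsur's theorem $\ker\bigl(\Br(F)\to\Br(L)\bigr)=\langle[A]\rangle$; since $[\C^+]$ and $[\mathcal{D}(C,\sigma_C)]$ both map to $[\mathcal{D}(C,\sigma_C)\otimes_F L]$ in $\Br(L)$, they coincide in $\Br(F)/\langle[A]\rangle$, which is the assertion. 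The step I expect to be the main obstacle is the identification used in the third paragraph, of the Clifford invariant of the trace form $q_h$ with the Brauer class of $\mathcal{D}(\Ad h)$: in characteristic different from $2$ this is classical and already underlies \cite{GQM2009}, but in characteristic $2$ it requires combining the characteristic-free discriminant algebra of \cite[\S 10]{BOI} with the theory of the Clifford invariant for quadratic forms in $I_q^2$, and some care is needed to keep the argument uniform in the borderline case where $K_L$ splits, in which $q_h$ is simply hyperbolic.
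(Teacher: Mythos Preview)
Your proposal is correct and follows essentially the same route as the paper: pass to the function field $L=F(\mathrm{SB}(A))$, use \Cref{split.ex} together with the uniqueness part of \Cref{existeunique.prop} to identify $(A_L,\sigma_L,f_L)$ with $\Ad(q_h)$, compute the invariants of the trace form $q_h$, and descend via Amitsur's theorem and the relative algebraic closedness of $F$ in $L$. The paper compresses the third and fourth steps into a citation of \cite[Chapter~10, Remark~1.4]{Schar85} for the invariants of $q_h$ and \cite[(10.35)]{BOI} for the discriminant algebra of $\Ad(h)$; your diagonalisation argument and the identification $q_h\simeq b\otimes\N_{K_L/L}$ amount to unpacking those references, and your worry about the characteristic~$2$ case and the split-$K_L$ edge case is precisely what those references handle (note that since $F$ is algebraically closed in $L$, $K_L$ is split only if $K$ already was).
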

\begin{proof}
    Let $F_A$ be the function field of the Severi-Brauer variety of $A$, which is a generic splitting field for $A$. 
    The field $F$ is algebraically closed in $F_A$, and the restriction map induces an injection $\Br(F)/\langle[A]\rangle\rightarrow \Br(F_A)$ by Amistur's theorem~\cite[Theorem 5.4.1]{GS2017}. 
    Therefore, it suffices to check the result when $A$ is split.  
    In view of \Cref{split.ex}, we may assume $(C,\sigma_C)$ is $\Ad(h)$ for some nonsingular hermitian form $h$ defined on a finite-dimensional $K$-vector space, and $(A,\sigma,f)$ is $\Ad(q_h)$, where $q_h$ is the trace form of $h$. 
    The result then follows from the description of the invariants of $q_h$ \cite[Chapter 10, Remark 1.4]{Schar85}, and the computation of the discriminant algebra of $\Ad(h)$ \cite[(10.35)]{BOI}. 
\end{proof}

\begin{rem}
    When $C$ has odd degree, so that $A$ has degree $2$ mod $4$, the same argument shows that the discriminant of $(\sigma,f)$ coincides with the class defining the \'etale quadratic extension $K$. 
\end{rem}

\section{Isotropic quadratic pairs and trialitarian triples}
This section aims at proving~\Cref{main.thm}, which characterizes isotropic tria\-litarian triples when the base field satisfies $I_q^3F=0$. 
In particular, over such a field, algebras of index $2$ with anisotropic quadratic pairs with trivial discriminant and Clifford algebra of index at most $2$ can only exists in degree $4$, as we proceed to show. We start with the following preliminary result: 

\begin{prop}\label{iso-quad-pair-I^3=0}
    Assume that $\I_q^3F=0$. Let $(A,\sigma,f)$ be an $F$-algebra with anisotropic quadratic pair. 
    If there exists an étale quadratic $F$-algebra $K$ such that $(A,\sigma,f)_K$ is split and hyperbolic, then $\deg A\leq4$.
\end{prop}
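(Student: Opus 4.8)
The plan is to translate the hypothesis into a statement about a quadratic form, invoke the structure theory over $K$, and then descend to $F$ using $I_q^3F=0$. Since $(A,\sigma,f)_K$ is split and hyperbolic, there is an even-dimensional $F$-vector space $V$ and a quadratic form $q$ over $F$ such that $(A,\sigma,f)\simeq \Ad(q)$ is impossible in general (as $A$ need not be split), so instead I first note that $(A,\sigma,f)_K$ being split forces $A_K$ to be split, hence the Schur index of $A$ divides $2$ and $A$ is Brauer-equivalent to a quaternion algebra, or $A$ itself is split. The key observation is that $\Ind(A)\mid 2$ together with anisotropy will be the tension producing the bound $\deg A\le 4$.

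First I would reduce to the split case over $F$. If $A$ is split, write $(A,\sigma,f)=\Ad(q)$ for a quadratic form $q$ of dimension $n=\deg A$ over $F$; then $q_K$ is hyperbolic, so $q$ is "hyperbolic over $K$", which by Springer-type theory for the quadratic extension $K=F(\sqrt{d})$ (or $K=F[X]/(X^2+X+a)$ in characteristic $2$) means $q$ lies in $\llangle d\rrrangle\cdot W_q F$, i.e. $q\simeq \llangle d\rrrangle\otimes \rho$ for some form $\rho$, up to Witt equivalence and after adjusting hyperbolic planes; more precisely $q$ is Witt-equivalent to a form in the image of the "norm" or "transfer-induced" construction, so $q\in I_qF$ and, being divisible by the one-fold Pfister form associated to $K$, its anisotropic part has even dimension and sits inside a controlled ideal. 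Since $q$ is anisotropic and $I_q^3F=0$, I would argue that $q$ itself — being a multiple of a binary form and of even dimension — has dimension at most $4$: a form divisible by $\llangle d\rrrangle$ that is anisotropic of dimension $\ge 6$ would, after scaling, contribute a nontrivial class detected in $I_q^3F$, contradiction. This is the arithmetic heart of the argument and is essentially the classical fact that anisotropic forms in $I_q^2F$ have dimension at most $4$ when $I_q^3F=0$ (Arason–Pfister / the structure of $I_q^2$ under $I_q^3=0$), combined with the fact that hyperbolicity over a quadratic extension forces $q$ into $\llangle d\rrrangle\cdot W_qF\subseteq I_q^2F$ once we also use $\disc$ or the orthogonal structure; I would cite \cite{EKM} for the statement that anisotropic forms in $I_q^2F$ have dimension $\le 4$ when $I_q^3F=0$.

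For the non-split case, $A$ has Schur index exactly $2$, so $A\sim Q$ for a quaternion $F$-algebra $Q$, and $A\otimes_FF_A$ is split where $F_A$ is the function field of the Severi–Brauer variety (conic) of $Q$. Passing to $F_A$, the pair $(A,\sigma,f)_{F_A}=\Ad(q')$ for some form $q'$ of dimension $\deg A$, and $(A,\sigma,f)_{F_A\cdot K}$ is split and hyperbolic, so the split-case argument bounds $\deg A\le 4$ — provided anisotropy is preserved, which is the main obstacle: $(\sigma,f)$ anisotropic over $F$ need not remain anisotropic over $F_A$. To handle this I would instead work directly over $F$: use that $\Ind(A)\le 2$ to write, via \Cref{existeunique.prop} or the index reduction formula, $(A,\sigma,f)$ as built from a lower-dimensional hermitian-type datum, and run the $I_q^3F=0$ estimate there; alternatively, invoke that under $I_q^3F=0$ and $\Ind A\le 2$, the classification theorem \cite[Theorem 5.1]{BFT2007} (valid in this index range, as recalled in the introduction) identifies $(A,\sigma,f)$ with an explicit model whose anisotropy in degree $\ge 6$ is visibly obstructed by the triviality of $I_q^3$. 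I expect the characteristic $2$ bookkeeping — keeping track of the semi-trace $f$ and of $\Alt$ versus $\Sym$ throughout the descent, rather than just the involution — to be the most delicate point, but it is handled by the quadratic-extension formalism of \S\ref{quadext.sec}, in particular \Cref{isot-subalg+unitary->quad-pair=isot}, which lets isotropy statements pass between the unitary datum over $K$ and the quadratic pair over $F$.
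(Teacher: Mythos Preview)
Your proposal identifies the right circle of ideas but does not close the argument, and it contains one incorrect claim.

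In the split case you assert that ``anisotropic forms in $I_q^2F$ have dimension at most $4$ when $I_q^3F=0$''. This is false: over Merkurjev's fields with $u$-invariant $6$ one has $I_q^3F=0$ (every $8$-dimensional form, in particular every $3$-fold Pfister form, is isotropic) yet there exist anisotropic Albert forms of dimension $6$ lying in $I_q^2F$. What \emph{is} true, and what the paper actually uses, is that an anisotropic quadratic form \emph{divisible by the norm form} $N_{K/F}$ has dimension at most $4$ when $I_q^3F=0$: writing $q\simeq \varphi\otimes N_{K/F}$ with $\varphi$ a symmetric bilinear form, any $3$-dimensional subform of $\varphi$ embeds in a scaled $2$-fold bilinear Pfister form, so the corresponding $6$-dimensional piece of $q$ sits inside a scaled $3$-fold quadratic Pfister form and is therefore isotropic. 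You gesture at divisibility by $\llangle d\rrangle$ but then retreat to the wrong general statement about $I_q^2$.

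The non-split case is the more serious gap. You correctly observe that passing to the function field of the conic can destroy anisotropy, and that invoking \cite{BFT2007} is not a proof here. The tool you name in your last sentence---the quadratic-extension formalism and \Cref{isot-subalg+unitary->quad-pair=isot}---is exactly what is needed, but you never deploy it, and you are missing the structural input that puts you in position to use it. The paper's argument runs as follows: since $(A,\sigma,f)_K$ is split hyperbolic, \cite[Proposition~7.3]{BD2015} gives a decomposition $(A,\sigma,f)\simeq \Ad(\varphi)\otimes (Q,\gamma,f_\ell)$ with $Q$ quaternion, $K\subset Q$, and $(Q,\gamma,f_\ell)$ the quadratic extension of $(K,\iota)$ from \Cref{quad-ext-quat}. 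By that same example, $(A,\sigma,f)$ is then a quadratic extension of the split unitary datum $(C,\sigma_C)=\Ad(\varphi)\otimes(K,\iota)=\Ad(h)$, whose trace form is $q_h=\varphi\otimes N_{K/F}$. Now \Cref{isot-subalg+unitary->quad-pair=isot} transfers anisotropy of $(A,\sigma,f)$ to anisotropy of $h$, hence of $q_h$, and the Pfister-subform argument above forces $\dim\varphi\leq 2$, i.e.\ $\deg A\leq 4$. This handles the split and non-split cases uniformly, with no passage to a generic splitting field and no appeal to classification theorems.
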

\begin{proof}
    It follows from the assumptions that 
    the algebra $A$ is Brauer equivalent to a quaternion algebra $Q$ containing $K$.
    Moreover, by~\cite[Proposition 7.3]{BD2015}, $(A,\sigma,f)$ decomposes as 
    \[(A,\sigma,f)\simeq\Ad(\varphi)\otimes (Q,\gamma,f_\ell),\] where $\varphi$ is a non-alternating symmetric bilinear form over $F$, and $(Q,\gamma,f_\ell)$ is a quadratic extension of $(K,\iota)$. 
    Therefore, as explained in \Cref{quad-ext-quat}, $(A,\sigma, f)$ is a quadratic extension of the $F$-algebra with unitary involution $(C,\sigma_C)=\Ad(\varphi)\otimes (K,\iota)$. 
    Moreover, since $(A,\sigma,f)$ is anisotropic, $(C,\sigma_C)$ also is anisotropic by~\Cref{isot-subalg+unitary->quad-pair=isot}. 
    We claim that this implies $\deg C\leq 2$ and hence $\deg A\leq 4$ as required.
    
    Indeed, since $(C,\sigma_C)=\Ad(\varphi)\otimes (K,\iota)$, $\sigma_C$ is the adjoint involution with respect to a hermitian form $h$ over $(K,\iota)$, and any diagonalisation of the bilinear form $\varphi$ provides a diagonalisation of $h$.
    Therefore, a direct computation shows the trace form $q_h$ of $h$ is given by $q_h=\varphi\otimes N_{K/F}$, where $N_{K/F}$ is the norm form of the \'etale quadratic $F$-algebra $K$.
    Assume $\dim\varphi=3$; then $q_h$ is a $6$-dimensional subform of a scaled $3$-fold Pfister quadratic form.
    Hence, as $\I_q^3F=0$, $q_h$ and $h$ are isotropic. This shows that if $\deg C\geq3$, then $\sigma_C$ is isotropic, as claimed.  
\end{proof}

As a consequence, we get the following: 
\begin{prop}\label{deg4n-quad-pair-isot-I^3=0}
    Assume that $\I_q^3F=0$. Let $(A,\sigma,f)$ be an $F$-algebra with quadratic pair such that $\deg A$ is divisible by $4$, $\ind A\leq2$ and $\disc(\sigma,f)$ is trivial.
    Let $\C^+$ and $\C^-$ be the simple components of the Clifford algebra $\C(A,\sigma,f)$.
\begin{enumerate}[$(1)$]
    \item If $\deg A>4$ and $\C^+$ and $\C^-$ have index at most $2$, then $(A,\sigma,f)$ is isotropic.
    \item If $\C^+$ or $\C^-$ is split, then $(A,\sigma,f)$ is hyperbolic.
\end{enumerate}    
\end{prop}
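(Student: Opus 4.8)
The plan is to deduce both parts from \Cref{iso-quad-pair-I^3=0} together with the descent to a quadratic splitting field for the Clifford components, reducing everything to a statement about split hyperbolic quadratic pairs over a quadratic extension. First I would treat part~(2). Suppose $\C^+$ is split. The center $Z$ of $\C(A,\sigma,f)$ is $F\times F$ since $\disc(\sigma,f)$ is trivial, and by the fundamental relations between an algebra with quadratic pair and its Clifford algebra (the index relations in \cite[\S 9]{BOI}), the hypothesis that $\C^+$ is split forces the existence of an isotropic, in fact hyperbolic, right ideal: concretely, one uses that $(A,\sigma,f)$ is hyperbolic as soon as one of the two components of the Clifford algebra is split, which is the even-degree analogue of the classical fact for orthogonal involutions in \cite[(8.31)]{BOI} extended to quadratic pairs (see also the characteristic-$2$ treatment via the Clifford bimodule). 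So part~(2) is essentially a citation once the discriminant is trivial, and I would phrase it that way rather than reprove it.

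For part~(1), assume $\deg A>4$, $\ind A\le 2$, $\disc(\sigma,f)$ trivial, and both $\C^+$, $\C^-$ of index at most $2$; suppose for contradiction that $(A,\sigma,f)$ is anisotropic. The key move is to find an étale quadratic $F$-algebra $K$ over which $(A,\sigma,f)$ becomes split and hyperbolic, and then invoke \Cref{iso-quad-pair-I^3=0} to conclude $\deg A\le 4$, a contradiction. Since $\ind A\le 2$, $A$ is Brauer equivalent to a quaternion algebra $Q$ (or split, in which case $A=\End_F(V)$ and $(A,\sigma,f)=\Ad(q)$ with $q$ a quadratic form of dimension $\deg A>4$ with trivial discriminant and Clifford invariant of index $\le 2$, so $q\in I_q^2F$; as $I_q^3F=0$ one computes that $q$ of dimension $\ge 6$ is isotropic, giving the contradiction directly). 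So we may assume $A\sim Q$ is division of degree~$2$. I then need a quadratic extension $K/F$ splitting $A$; any separable quadratic subfield of $Q$ works in characteristic $0$ or odd, and in characteristic $2$ one takes a separable quadratic splitting field of $Q$, which exists. Over $K$, $A_K$ is split, so $(A,\sigma,f)_K=\Ad(q_K)$ for some quadratic form $q_K$ over $K$ of dimension $\deg A$. Its discriminant is still trivial and its Clifford invariant is the class of $(\C^\pm)_K$; but $\C^+$ and $\C^-$ have index $\le 2$ and are split by $K$ precisely when $K$ splits them — here I must be slightly careful: $K$ splits $A$, and by the index relation $\ind \C^\pm$ divides (a power of) $2$ with $\C^+\otimes\C^-\sim A^{\otimes ?}$; in fact for trivial discriminant one has $[\C^+]+[\C^-]=[A]$ and $[\C^+]-[\C^-]$ or similar lies in the subgroup generated by $[A]$, so both $\C^\pm$ are split by any field splitting $A$. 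Hence $(q_K)$ has trivial discriminant and trivial Clifford invariant, so $q_K\in I_q^3K=0$ — here $I_q^3K=0$ because $K/F$ is a quadratic extension and $I_q^3F=0$ implies $I_q^3K=0$ (this is standard: $I_q^3$ vanishes over $K$ since $3$-fold quadratic Pfister forms over $K$ are hyperbolic, using that quaternion algebras over $K$ have surjective reduced norm, which follows from $I_q^3F=0$ by a norm/transfer argument, or one cites \cite[Theorem A]{LT1999} / the relevant statement). Therefore $q_K$ is hyperbolic, i.e. $(A,\sigma,f)_K$ is split and hyperbolic, and \Cref{iso-quad-pair-I^3=0} forces $\deg A\le 4$, contradicting $\deg A>4$.

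The main obstacle I anticipate is the bookkeeping in characteristic~$2$ for the Clifford components: making precise that the index of $\C^\pm$ is controlled and that a splitting field of $A$ splits $\C^\pm$ (so that $q_K$ really lands in $I_q^3K$), and that $I_q^3K=0$ follows from $I_q^3F=0$ for $K/F$ quadratic. The first point should follow from the general index formulas for Clifford algebras of quadratic pairs in \cite[\S 9.C]{BOI}, valid in all characteristics; the second from the behaviour of $I_q^n$ under quadratic extensions, together with the equivalence $I_q^3F=0\iff$ all quaternion algebras over $F$ have surjective reduced norm, which is quoted in the introduction. The split case is a clean sanity check and I would include it as the base case of the argument rather than a separate lemma. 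Everything else — constructing $I=J\oplus Jv$-type ideals, or chasing semi-traces — has already been done in the preceding propositions, so the proof is genuinely short once these two facts are in place.
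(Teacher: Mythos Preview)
There are two genuine gaps. For part~(2), you treat ``$\C^+$ split $\Rightarrow (A,\sigma,f)$ hyperbolic'' as a citation of \cite[(8.31)]{BOI}, but (8.31) is the \emph{opposite} implication: hyperbolicity of $(\sigma,f)$ forces a split Clifford component. The converse is false without the hypothesis $I_q^3F=0$ and is precisely what must be proved. The paper does this by passing to the anisotropic part $(A_0,\sigma_0,f_0)$, checking that it still has degree divisible by $4$, trivial discriminant and a split Clifford component, then using part~(1) to force $\deg A_0\in\{0,4\}$; finally $\deg A_0=4$ is excluded via \cite[(15.14)]{BOI}.

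For part~(1), your key step ``a quadratic field $K$ splitting $A$ also splits $\C^+$ and $\C^-$'' is not correct. The only relation you have is $[\C^+]\cdot[\C^-]=[A]$ in $\Br(F)$; since these classes are $2$-torsion this gives a single constraint, and it does \emph{not} place $[\C^\pm]$ in the subgroup generated by $[A]$. (Concretely, $A$, $\C^+$, $\C^-$ can be three pairwise non-isomorphic quaternion algebras.) The same oversight bites in your split sub-case: $I_q^3F=0$ does not make every form in $I_q^2F$ of dimension $\geq 6$ isotropic---anisotropic Albert forms exist over such fields. The paper's fix is to choose $K$ more carefully: since $A$, $\C^+$, $\C^-$ all have index $\leq 2$ and $[\C^+][\C^-]=[A]$, there is a \emph{common} separable quadratic splitting field $K$ for all three (Albert's common-slot theorem, cited as \cite[Theorem~98.19]{EKM}). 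Over that $K$ the adjoint form really has trivial discriminant and trivial Clifford invariant, hence lies in $I_q^3K$, and $I_q^3K=0$ (by \cite[Theorem~34.22]{EKM}) gives hyperbolicity; then \Cref{iso-quad-pair-I^3=0} yields the contradiction. Your overall architecture is right, but the choice of $K$ is the whole point.
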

\begin{rem} 
    The assertion $(2)$ is valid in arbitrary index under the stronger condition that $F$ has $2$-separable cohomological dimension ${\mathrm {sd}}_2(F)$ at most $2$ by~\cite[Theorem 5.1]{BFT2007}. 
    Their argument provides the outline of an alternative proof for our result. 
    Indeed, their proof is based on a classification theorem for unitary involutions over fields $F$ with ${\mathrm {sd}}_2(F)\leq 2$, proved in~\cite[Theorem 3.2]{BFT2007}. 
    Since we only consider algebras of index $2$, we would need an analogous statement under the weaker assumption that $I_q^3F=0$ only for split algebras with unitary involutions. 
    This is proved in characteristic different from $2$ in~\cite[Lemma 4.1.3]{BP1995}, with an argument which is valid in arbitrary characteristic. 
\end{rem}
\begin{proof} 
    $(1)$ By the fundamental relations \cite[(9.12)]{BOI}, $\C^+\otimes_F\C^-$ is Brauer equivalent to $A$. 
    In addition, all three algebras have index at most $2$. 
    Therefore, there exists a separable quadratic field extension $K/F$ over which they are split, see~\cite[Theorem 98.19]{EKM}.
    It follows that $(A,\sigma,f)_K\simeq\Ad(\varphi)$ for some nonsingular quadratic form $\varphi$ over $K$, of dimension $4n$ for some integer $n$, and with even Clifford algebra $\C_0(\varphi)\simeq \C(A,\sigma,f)_K\simeq \C^+_K\times\C_K^-$. 
    So, the quadratic form $\varphi$ has trivial discriminant and trivial Clifford invariant. 
    Since $\I_q^3F=0$, we have that $\I_q^3K=0$, by \cite[Theorem 34.22]{EKM}.
    Therefore $\varphi$ is hyperbolic~\cite[Theorem 3.11]{EL73}, \cite[Theorem 16.3]{EKM}. 
    Then $(A,\sigma,f)_K$ is split hyperbolic, and the conclusion follows from~\Cref{iso-quad-pair-I^3=0}, since $\deg A>4$.
     
    $(2)$ Assume now that $\C^+$ or $\C^-$ is split. 
    If $A$ is split, then $(\sigma,f)$ is adjoint to a quadratic form $\varphi$ with trivial discriminant and trivial Clifford invariant. 
    Therefore, in this case, it is hyperbolic, since $I_q^3F=0$. 
    So we may assume $A$ has index $2$. Let $(A_0,\sigma_0,f_0)$ be the anisotropic part of $(A,\sigma,f)$, as in~\cite[Proposition 1.11]{BFT2007}.
    The algebra $A_0$ also has degree divisible by $4$, and we claim that $(A_0,\sigma_0,f_0)$ has trivial discriminant, and $\C(A_0,\sigma_0,f_0)$ has a split component. 
    This would follow from the fact that the discriminant and the Clifford invariant are invariants of the Witt class of a quadratic form if $A$ were split, see~\cite[Lemma 13.4 and Lemma 14.2]{EKM}. 
    The general case reduces to this observation by extending scalars to the function field of the Severi-Brauer variety of a division algebra $D$ Brauer equivalent to $A$ and $A_0$. 
    In view of (1), it follows that $A_0$ has degree $0$ or $4$, and we claim that the degree $4$ case is impossible. 
    Indeed, by the exceptional isomorphism ${\mathsf{A}}_1^2\equiv{\mathsf{D}}_2$, see~\cite[\S 15.B]{BOI}, if $A_0$ has degree $4$, and $(\sigma_0,f_0)$ is anisotropic with trivial discriminant, then both components of its Clifford algebra are non split. 
    More precisely, $(A_0,\sigma_0,f_0)$ is the norm of its Clifford algebra, viewed as an algebra with quadratic pair.
    Since $(\sigma_0,f_0)$ has trivial discriminant, by \cite[(15.12)]{BOI}, this means
    \[(A_0,\sigma_0,f_0)\simeq(Q^+\otimes Q^-, \gamma^+\otimes\gamma^-,f_\otimes),\] 
    where $Q^+$ and $Q^-$ are $F$-quaternion algebras isomorphic to the two components of the Clifford algebra of $(A_0,\sigma_0,f_0)$, hence Brauer-equivalent to $\C^+$ and $\C^-$, $\gamma^+$, $\gamma^-$ denote the canonical involution of $Q^+$, $Q^-$ respectively, and $f_\otimes$ is the canonical semi-trace on this tensor product defined in~\cite[(5.21)]{BOI}. 
    In particular, $(\sigma_0,f_0)$ is anisotropic if and only if both $Q^+$ and $Q^-$ are non-split, see~\cite[(15.14)]{BOI}.
\end{proof}

Let $T=\bigl((A,\sigma_A,f_A),(B,\sigma_B,f_B),(C,\sigma_C,f_C)\bigr)$ be a trialitarian triple over $F$, see~\S\ref{prel.sec} for a definition. 
The triple of natural numbers $(a,b,c)$ obtained by permutation of the entries of $(\ind A,\ind B,\ind C)$ and such that $a\leq b\leq c$ will be called the \emph{index of the triple $T$}. We denote it by $\ind T$.
The triple $T$ is called isotropic if one of the three algebras with quadratic pair involved in $T$ is isotropic. 
This holds if and only if all three algebras with quadratic pairs are isotropic, since they correspond to the same algebraic group (see also~\cite{Ga1999} for a direct proof when $\cchar F\not =2$). 

Isotropy for such a triple can only occur for some specific values of the index of $T$, as the following lemma shows:
\begin{lem} \label{ind.lem}
    If $T$ is an isotropic trialitarian triple over $F$, then its index belongs to 
    $\{(1,1,1), (1,2,2), (1,4,4), (2,2,2)\}$. 
\end{lem}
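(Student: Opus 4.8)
The plan is to exploit the structure of a trialitarian triple via its Clifford algebra relations (Proposition~\ref{triality-permut}) and the fact that the index is an invariant of the isomorphism type. First I would recall that if one entry of $T$ is isotropic then all three are; in particular, since an isotropic algebra with quadratic pair of degree $8$ has a split component coming off from its anisotropic part, isotropy should be translated into a Brauer-theoretic constraint. Fix a trialitarian triple $T$ and let $D$ be a division algebra Brauer-equivalent to $A$; by the fundamental relations \cite[(9.12)]{BOI} applied inside the isomorphism $\varphi_A$, the Brauer class $[A]$ equals $[B]\cdot[C]$, and similarly $[B]=[C]\cdot[A]$, $[C]=[A]\cdot[B]$. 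Since all entries have exponent dividing $2$, these three relations say that the classes $[A],[B],[C]$ generate a subgroup of $\Br(F)$ of order $1$ or $2$: either all three are split, or exactly two of them coincide with the unique nontrivial class and the third is split, or all three equal that class. Translating into indices: either $(\ind A,\ind B,\ind C)$ is a permutation of $(1,1,1)$, or a permutation of $(1,d,d)$ where $d=\ind D>1$, or $(d,d,d)$.

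Next I would bound $d$. For the cases involving a split component: say $[C]=0$, so $C$ is split and $[A]=[B]$. Applying Proposition~\ref{triality-permut} with the roles permuted, the Clifford algebra of $(C,\sigma_C,f_C)$ — which, being split of degree~$8$, is adjoint to a quadratic form — has simple components $A$ and $B$. Now if $T$ is isotropic, then $(C,\sigma_C,f_C)=\Ad(q)$ with $q$ isotropic; writing $q=\mathbb{H}\perp q_0$ one checks that the two components of $\C_0(q)$ then become Brauer-equivalent, consistent with $[A]=[B]$, and their common index is controlled by $q_0$. More usefully, I would invoke \Cref{iso-quad-pair-I^3=0}: isotropy of $(A,\sigma_A,f_A)$ together with the split quadratic extension $K/F$ over which $A$ (index $\leq 2$ in the relevant subcase) and its Clifford components become split and hyperbolic — this forces $\deg A\leq 4$ unless the quadratic pair is already isotropic over $F$ in a controlled way, giving the ceiling $\ind A\leq 4$ for all entries. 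Combined with the subgroup analysis, the admissible index triples collapse to $(1,1,1)$, $(1,2,2)$, $(1,4,4)$, $(2,2,2)$, and $(4,4,4)$; the remaining task is to rule out $(4,4,4)$.

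To exclude $(4,4,4)$ I would argue that if all of $A,B,C$ have index $4$ and $T$ is isotropic, then each $(A,\sigma_A,f_A)$ has an anisotropic part of degree $\leq 4$ by~\Cref{iso-quad-pair-I^3=0} (applied after splitting over a suitable extension where the relevant algebras have index $\leq 2$), but an algebra of index $4$ cannot have a quadratic pair whose anisotropic part has degree $\leq 4$, since a degree-$4$ algebra has index $\leq 2$; so the quadratic pair must in fact be hyperbolic, whence $A$ (being Brauer-equivalent to a hyperbolic algebra with quadratic pair, which carries an ideal of dimension $\tfrac12\deg$) has index $1$ or $2$, contradicting $\ind A=4$. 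Hence $(4,4,4)$ is impossible and the index lies in the stated set.

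The main obstacle I anticipate is the bookkeeping in the mixed-index cases: one must be careful that \Cref{iso-quad-pair-I^3=0} is applied only to algebras of index $\leq 2$ (so that a quaternion-splitting quadratic extension $K/F$ genuinely exists and makes the Clifford components split), and that the descent from the statement over $K$ back to $F$ — going through the function field of the Severi–Brauer variety of the relevant division algebra, as in the proof of \Cref{deg4n-quad-pair-isot-I^3=0}(2) — is legitimate. Getting the index-$4$ case to fall out cleanly, rather than via a messy form-theoretic computation, is the part that will require the most care.
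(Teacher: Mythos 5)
Your Brauer-theoretic analysis at the start is incorrect. From the fundamental relations one gets $[A][B][C]=0$ with each class of exponent at most $2$, but this does \emph{not} force the subgroup $\langle[A],[B],[C]\rangle$ to have order $1$ or $2$: it can be a Klein four group, e.g. $[A]=\alpha$, $[B]=\beta$, $[C]=\alpha\beta$ with $\alpha,\beta$ independent. In that situation the index triple can a priori be $(2,2,4)$, $(2,4,4)$ or $(4,4,4)$, and nothing in your argument rules out the first two --- you never even list them. (Also, your listed case $(d,d,d)$ with $d>1$ cannot arise from the order-$2$ case, since $[A]=[B]=[C]=\alpha$ with $\alpha^3=0$ forces $\alpha=0$; it only arises from the Klein-four case you omitted.)

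There are two further problems. First, \Cref{iso-quad-pair-I^3=0} assumes $I_q^3F=0$, whereas the statement you are proving carries no hypothesis on $F$; invoking it here is a category error, and the lemma is in fact provable unconditionally. Second, the argument used to exclude $(4,4,4)$ relies on the false claim that a degree-$4$ algebra has index at most $2$ (biquaternion division algebras exist), and on the unjustified assertion that a hyperbolic quadratic pair forces the underlying algebra to have index at most $2$ (a degree-$8$ algebra of index $4$ carries right ideals of reduced dimension $4$, so it can be hyperbolic).

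The paper's argument is shorter and avoids all of this. Since the triple is isotropic, none of $A,B,C$ is division, so each index lies in $\{1,2,4\}$. Order so that $\ind A\leq\ind B\leq\ind C$. If $\ind A=1$, then by~\cite[(9.12)]{BOI} the two components $B$ and $C$ of $\C(A,\sigma_A,f_A)$ are Brauer-equivalent (hence isomorphic, being of the same degree), so $\ind T\in\{(1,1,1),(1,2,2),(1,4,4)\}$. If $\ind C=4$, then every nonzero proper right ideal of $C$ has $F$-dimension $\tfrac12\dim_F C$, so isotropy forces $(C,\sigma_C,f_C)$ to be hyperbolic; by~\cite[(8.31)]{BOI} its Clifford algebra, which is $A\times B$, then has a split component, so $\ind A=1$ and $\ind T=(1,4,4)$. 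The only remaining case, $\ind A\geq 2$ and $\ind C\leq 2$, is $(2,2,2)$. This is the observation you need to exclude $(2,2,4)$, $(2,4,4)$ and $(4,4,4)$ all at once, and it uses only hyperbolicity of an isotropic index-$4$ quadratic pair together with~\cite[(8.31)]{BOI}, without any hypothesis on $F$.
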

\begin{proof}
    Let $T=\bigl((A,\sigma_A,f_A),(B,\sigma_B,f_B),(C,\sigma_C,f_C)\bigr)$ be an isotropic trialitarian triple over $F$. 
    Then all three algebras with quadratic pair are isotropic, hence the triple $T$ does not contain any division algebra. 
    In particular, $\ind A$, $\ind B$ and $\ind C$ belong to $\{1,2,4\}$. 
    Permuting the entries if necessary, we may assume that the triple is ordered by indices, i.e. $\ind A\leq \ind B\leq \ind C$. 
    If $\ind A=1$, then the two components of $\C(A,\sigma_A,f_A)$ are isomorphic, see~\cite[(9.12)]{BOI}. 
    Therefore, in this case $\ind T\in\{(1,1,1), (1,2,2), (1,4,4)\}$. 
    Assume now $\ind C=4$. 
    Hence all non trivial proper right ideals in $C$ have dimension $\frac 12 \dim_FC$. 
    Since in addition $(C,\sigma_C,f_C)$ is isotropic, it is hyperbolic. 
    So its Clifford algebra has a split component, see~\cite[(8.31)]{BOI}. Hence $\ind T=(1,4,4)$ in this case. 
    The only remaining possible value is $(2,2,2)$ and this concludes the proof. 
\end{proof}

The converse also holds under some condition on the base field, as we proceed to show; more precisely, we have the following:  
\begin{thm}\label{main.thm}
    Assume that $\I_q^3F=0$ and let $T$ be a trialitarian triple over $F$. 
    The following are equivalent: 
\begin{enumerate}[$(1)$]
    \item The triple $T$ is isotropic.
    \item $\ind T\in\{(1,1,1), (1,2,2), (1,4,4), (2,2,2)\}$.
\end{enumerate}	
\end{thm}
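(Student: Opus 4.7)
The implication $(1)\Rightarrow(2)$ is already \Cref{ind.lem}, so the plan is to prove the converse $(2)\Rightarrow(1)$. Using \Cref{triality-permut} to permute the entries of $T$, I may always place the entry of smallest index in the ``$A$'' slot. With this convention, the cases $(1,1,1)$, $(1,2,2)$, and $(2,2,2)$ all satisfy the hypotheses of \Cref{deg4n-quad-pair-isot-I^3=0}(1) for $(A,\sigma_A,f_A)$: $\deg A=8>4$, $\ind A\leq 2$, the discriminant is trivial by definition of a trialitarian triple, and the simple components $\C^+\simeq B$ and $\C^-\simeq C$ of $\C(A,\sigma_A,f_A)$ have index at most $2$. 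The proposition then yields isotropy of $(A,\sigma_A,f_A)$, hence of $T$.

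The remaining case $(1,4,4)$ is the main obstacle, since it is not covered by \Cref{deg4n-quad-pair-isot-I^3=0}: although $\ind A=1$, the Clifford components $B$ and $C$ both have index $4$. With $A$ in the index-$1$ slot, $(A,\sigma_A,f_A)=\Ad(q)$ for some nonsingular $8$-dimensional form $q\in \I_q^2 F$, whose Clifford invariant $c(q)\in\Br_2(F)$ coincides with the common Brauer class of $B$ and $C$, and is therefore represented by a biquaternion division algebra $D$ of index $4$. The plan is to compare $q$ with an Albert form $\varphi_D$ of $D$: a $6$-dimensional form in $\I_q^2 F$ with $c(\varphi_D)=[D]$, which exists in arbitrary characteristic. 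Writing $D\simeq Q_1\otimes_F Q_2$, it can be obtained as the anisotropic part of the orthogonal sum of the norm forms of $Q_1$ and $Q_2$; in characteristic $2$, the fact that each norm form represents $1$ supplies a regular isotropic vector in the sum, and hence a hyperbolic plane to split off.

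By Merkurjev's theorem, extended to characteristic $2$ by Kato, the Clifford invariant defines an isomorphism $\I_q^2 F/\I_q^3 F\xrightarrow{\sim}\Br_2(F)$. Under $\I_q^3 F=0$ this makes $c$ injective on $\I_q^2 F$, so $c(q-\varphi_D)=0$ forces $q$ and $\varphi_D$ to be Witt-equivalent; hence $\dim q_{\mathrm{an}}=\dim(\varphi_D)_{\mathrm{an}}\leq 6<8$, so $q$ is isotropic, and $T$ is isotropic. The delicate points are the characteristic-$2$ versions of the Albert form construction and of Kato's identification, both of which I would need to cite carefully from the literature.
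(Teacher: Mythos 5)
Your argument is correct and takes essentially the same route as the paper: $(1)\Rightarrow(2)$ is \Cref{ind.lem}, the $(2,2,2)$ case is \Cref{deg4n-quad-pair-isot-I^3=0}, and the comparison of $q$ with a $6$-dimensional Albert form $q'$ of the common Brauer class of the Clifford components, using that $q\perp -q'$ lies in $\I_q^3F=0$, is exactly the paper's key step. The only (immaterial) difference is bookkeeping: the paper runs the Albert-form comparison uniformly over all triples containing a split entry, i.e.\ $(1,1,1)$, $(1,2,2)$ and $(1,4,4)$, whereas you reserve it for $(1,4,4)$ and dispatch the first two cases via \Cref{deg4n-quad-pair-isot-I^3=0} as well.
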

\begin{proof}
    The implication $(1)\Rightarrow(2)$ is given by \Cref{ind.lem}, and it only remains to prove the converse. 
    Let $T$ be a trilitarian triple with index as in (2). If $\ind T=(2,2,2)$, $T$ is isotropic by \Cref{deg4n-quad-pair-isot-I^3=0}.
    Otherwise, one element in the triple is isomorphic to $\Ad(q)$ for some nonsingular quadratic form $q$ of dimension $8$, and the other two algebras are Brauer equivalent to a biquaternion algebra $B$, of index $1$, $2$ or $4$.
    By \cite[\S 16.A]{BOI}, $B$ is also Brauer equivalent to $\C(q')$ for a nonsingular quadratic form $q'$ of dimension $6$ with trivial discriminant. 
    Therefore, the Witt class of $q\perp-q'$ belongs to $I_q^3F$, which is trivial by our assumption, hence $q\perp-q'$ is hyperbolic. 
    Since $\dim q>\dim q'$, it follows that $q$ is isotropic. Hence $\Ad(q)$ is isotropic, whereby $T$ is isotropic.
\end{proof}

\begin{rem}
    Assume $I_q^3F=0$. Let $T=\bigl((A,\sigma_A,f_A),(B,\sigma_B,f_B),(C,\sigma_C,f_C)\bigr)$ be an isotropic trialitarian triple over $F$. 
    We assume $\ind A\leq\ind B\leq\ind C$.
    It follows from the proof of \Cref{main.thm} that the index of $(A,\sigma_A,f_A)$ in the sense of~\cite[\S 6.A]{BOI}, and hence the Tits index of the underlying algebraic groups, are determined by the index of $T$. 
    The correspondence is given in the table below, cf.~\cite{Tits} and~\cite[\S\,II.5]{DCGT}.
\begin{table}[hbt]

\label{table}
\begin{tabular}{ccc}
    Index of $T$ & Index of $(A,\sigma_A,f_A)$ & Tits index of ${\mathrm{Spin}}(A,\sigma_A,f_A)$  \\
\hline
&&\\
    (1,1,1)  &\{0,1,2,3,4\}&$\begin{picture}(7,2)
    \put(0,2){\circle*{3}}
    \put(0,2){\circle{7}}
    \put(17,2){\circle*{3}}
    \put(17,2){\circle{7}}
    \put(34,14.9){\circle*{3}}
    \put(34,14.9){\circle{7}}
    \put(34,-10.9){\circle*{3}}
    \put(34,-10.9){\circle{7}}
    \put(0,2){\line(2,0){17}}
    \put(17,2){\line(4,3){17}}
    \put(17,2){\line(4,-3){17}}
\end{picture}$\\
&&\\
&&\\
    (1,2,2)&\{0,1,2\}& $\begin{picture}(7,2)
    \put(0,2){\circle*{3}}
    \put(0,2){\circle{7}}
    \put(17,2){\circle*{3}}
    \put(17,2){\circle{7}}
    \put(34,14.9){\circle*{3}}
    \put(34,-10.9){\circle*{3}}
    \put(0,2){\line(2,0){17}}
    \put(17,2){\line(4,3){17}}
    \put(17,2){\line(4,-3){17}}
\end{picture}$\\
&&\\
&&\\
    (1,4,4)&\{0,1\}& $\begin{picture}(7,2)
    \put(0,2){\circle*{3}}
    \put(0,2){\circle{7}}
    \put(17,2){\circle*{3}}
    \put(34,14.9){\circle*{3}}
    \put(34,-10.9){\circle*{3}}
    \put(0,2){\line(2,0){17}}
    \put(17,2){\line(4,3){17}}
    \put(17,2){\line(4,-3){17}}
\end{picture}$ \\
&&\\
&&\\
    (2,2,2)&\{0,2\}&$\begin{picture}(7,2)
    \put(0,2){\circle*{3}}
    \put(17,2){\circle*{3}}
    \put(17,2){\circle{7}}
    \put(34,14.9){\circle*{3}}
    \put(34,-10.9){\circle*{3}}
    \put(0,2){\line(2,0){17}}
    \put(17,2){\line(4,3){17}}
    \put(17,2){\line(4,-3){17}}
\end{picture}$\\
\end{tabular}

\end{table}

    Note that in the second and third cases of the table, the diagram is not invariant under rotation. 
    This reflects the fact that $(B,\sigma_B,f_B)$ and $(C,\sigma_C,f_C)$, which are isomorphic as algebras with involution when $A$ is split, do not have the same index as $(A,\sigma_A,f_A)$ in these two cases. More precisely, they have index $\{0,2,4\}$ (resp. $\{0,4\}$) when the triple $T$ has index $(1,2,2)$ (respectively $(1,4,4)$).
\end{rem}

A trialitarian triple that contains a division algebra is anisotropic. The converse does not hold in general, even over fields with $I_q^3F=0$. 
Explicit examples of anisotropic triples of index bounded by $4$ are provided in~\cite{QMSZ2012} in characteristic different from $2$. Using orthogonal sum of quadratic pairs, we now extend these examples to the characteristic $2$ case for fields with $I_q^3F=0$.

Let $(A,\sigma,f), (A_1,\sigma_1,f_1)$ and $(A_2, \sigma_2,f_2)$ be $F$-algebras with quadratic pair. 
Recall that $(A,\sigma,f)$ is called an orthogonal sum of $(A_1,\sigma_1,f_1)$ and $(A_2,\sigma_2,f_2)$, and we write
$$(A,\sigma,f)\in(A_1,\sigma_1,f_1)\boxplus(A_2,\sigma_2,f_2)$$
if there are symmetric orthogonal idempotents $e_1$ and $e_2$ in $A$ with $e_1+e_2=1$ such that for $i=1,2$
$$(e_iAe_i,\sigma|_{e_iAe_i})\simeq(A_i,\sigma_i),$$
so that we may identify $A_i$ with a subset of $A$, and
$$f(x_i)=f_i(x_i)\quad \text{for all}\quad x_i\in\Sym(A_i,\sigma_i).$$
Note that, in general, the $F$-algebra with quadratic pair $(A,\sigma,f)$ is not uniquely determined by the two summands $(A_i,\sigma_i,f_i)$ for $i=1,2$.

Given $a\in F$ with $-4a\neq1$ and $b\in F^{\times}$, we use the notation $[a,b)_F$ in the following examples for the $F$-quaternion algebra \[F\oplus Fi\oplus Fj\oplus Fij,\mbox{ where } i^2-i=a,\ j^2=b\mbox{ and }ji=(1-i)j.\]
In particular, if $F$ has characteristic $2$, then $F(i)$ is a quadratic separable extension of $F$, while $F(j)$ is inseparable.

\begin{ex}\label{anisot-alg+orthogonal-deg8-ind2-I^3=0}
    Let $k$ be a field of characteristic $2$. Set $k'=k(X_1,Y_1,X_2,Y_2)$. We consider the $k'$-quaternion algebras defined by 
    \[Q_1=[X_1+X_2,Y_1)_{k'}, Q_2=[X_1+X_2,Y_2)_{k'},\]
    \[Q_3=[X_1,Y_1Y_2)_{k'}
    \mbox{ and }Q_4=[X_2,Y_1Y_2)_{k'}.\]
    Set $Q=[X_1+X_2,Y_1Y_2)_{k'}$. Then $Q$ is a quaternion division algebra over $k'$, and we have the following Brauer equivalences:
    $$Q_1\otimes_{k'}Q_2\sim Q\sim Q_3\otimes_{k'}Q_4.$$
    For $1\leq i\leq4$, let $\gamma_i$ denote the canonical involution on $Q_i$.
    For all $1 \leq i,j\leq4$ with $i\neq j$, denote by $f_{ij}$ the canonical semi-trace on $\Sym(Q_i\otimes_FQ_j,\gamma_{i}\otimes\gamma_{j})$ as described in \cite[Notation 2.6]{DQM2021}. Consider 
    $$(A,\sigma,f)\in(Q_1\otimes_FQ_2,\gamma_{1}\otimes\gamma_{2},f_{12})\boxplus(Q_3\otimes_FQ_4,\gamma_{3}\otimes\gamma_{4},f_{34}).$$
    Then $(A,\sigma,f)$ is a $k'$-algebra with quadratic pair such that $\deg A=8$ and $A\sim Q$, and $\disc\sigma$ is trivial. 
    Let $\C^+$ and $\C^-$ denote the simple components of the Clifford algebra $\C(A,\sigma,f)$. 
    Then $\C^+\otimes_{k'}\C^-\sim Q$.
    By \cite[Proposition 5.4]{DQM2021}, we have, up to permutation, that 
    $$(\C^+,\sigma^+,f^+)\in(Q_1\otimes_FQ_3,\gamma_{1}\otimes\gamma_{3},f_{13})\boxplus(Q_2\otimes_FQ_4,\gamma_{2}\otimes\gamma_{4},f_{24}),$$
    $$(\C^-,\sigma^-,f^-)\in(Q_1\otimes_FQ_4,\gamma_{1}\otimes\gamma_{4},f_{14})\boxplus(Q_2\otimes_FQ_3,\gamma_{2}\otimes\gamma_{3},f_{23}).$$
    Set $B^+=[X_1,Y_2)_{k'}\otimes_{k'}[X_2,Y_1)_{k'}$, $B^-=[X_1,Y_1)_{k'}\otimes_{k'}[X_2,Y_2)_{k'}$.
    Then $B^+$ and $B^-$ are division algebras over $k'$, and we have that $\C^+\sim B^+$, $\C^-\sim B^-$, and $B^+\otimes_{k'}B^-\sim Q$.
    Let $q_{B^+}$ be an Albert form of $B^+$ and $k''=k'(q_{B^+})$ its function field. 
    Then $\ind B^+_{k''}=\ind Q_{k''}=2$ and $\ind B^-_{k''}=4$.

    Merkurjev's construction in \cite{Merkur1991u6} allows us to obtain a field extension $F/k'$ such that $\I_q^3F=0$, $\ind Q_{F}=2$ and $\ind B^+_{F}=\ind B^-_{F}=4$. Hence, the trialitarian triple 
\begin{equation*}
    \mbox{$T=((A,\sigma,f)_F,(\C^+,\sigma^+,f^+)_F,(\C^-,\sigma^-,f^-)_F)$}
\end{equation*}   
    has index $(2,4,4)$. 
    Similarly, there exists a field extension $F'/k''$ such that $\I^3_qF'=0$, $\ind Q_{F'}=\ind B^+_{F'}=2$ and $\ind B^-_{F'}=4$.
    Hence, the trialitarian triple 
\begin{equation*}
    \mbox{$T'=((A,\sigma,f)_{F'},(C^+,\sigma^+,f^+)_{F'},(C^-,\sigma^-,f^-)_{F'})$}
\end{equation*}    
    has index $(2,2,4)$. Both triples are anisotropic by \Cref{main.thm}.  
\end{ex}

\section*{Acknowledgments}
This work was supported by the Fonds Wetenschappelijk Onderzoek – Vlaanderen (FWO) in the FWO Odysseus Programme (project G0E6114N, \emph{Explicit Methods in Quadratic Form Theory}), by the FWO-Tournesol programme (project VS05018N), by the Fondazione Cariverona in the programme Ricerca Scientifica di Eccellenza 2018 (project \emph{Reducing complexity in algebra, logic, combinatorics - REDCOM}), and by T\"{U}B\.{I}TAK-221N171.

\end{document}